\newcommand{\area}{{\rm Area}}
\newcommand{\vol}{{\rm Vol}}
\def\th@alexnormal{%
\let\thm@indent\noindent % no indent
\thm@headfont{\bfseries}% heading font is bold
%\thm@preskip 1cm
%\thm@postskip\thm@preskip
\normalfont
}
\def\th@alexit{%
\let\thm@indent\noindent % no indent
\thm@headfont{\bfseries}% heading font is bold
%\thm@preskip 1cm
%\thm@postskip\thm@preskip
\normalfont
\fontshape{it}
\selectfont
}
\theoremstyle{alexit}
\newtheorem{theorem}[equation]{Theorem}
\newtheorem{proposition}[equation]{Proposition}
\newtheorem{lemma}[equation]{Lemma}
\newtheorem{corollary}[equation]{Corollary}
\newtheorem{conj}[equation]{Conjecture}
\theoremstyle{remark}
\newtheorem{remark}[equation]{Remark}
\theoremstyle{definition}
\newtheorem{definition}[equation]{Definition}
\numberwithin{equation}{subsection}
\begin{document}
\author{Alexandre Girouard}
\address{D\'epartement de Math\'ematiques et
Statistique, Universit\'e de Montr\'eal, C. P. 6128,
Succ. Centre-ville, Montr\'eal, Canada H3C 3J7}
\email{girouard@dms.umontreal.ca}
\author{Nikolai Nadirashvili}
\address{Laboratoire d'Analyse, Topologie, Probabilit\'{e}s UMR 6632,
Centre de Math\'{e}matiques et Informatique, Universit\'{e} de
Provence, 39 rue F. Joliot-Curie, 13453 Marseille Cedex 13, France;}
\email{nicolas@cmi.univ-mrs.fr}
\author{Iosif Polterovich}
\address{D\'epartement de Math\'ematiques et
Statistique, Universit\'e de Montr\'eal, C. P. 6128,
Succ. Centre-ville, Montr\'eal, Canada H3C 3J7}
\email{iossif@dms.umontreal.ca}

\title[Maximizaton of the second positive Neumann
eigenvalue]{Maximization of the second positive Neumann eigenvalue for
  planar domains}
\date{\today}
\begin{abstract} We prove that the second positive Neumann eigenvalue
of a bounded simply-connected planar domain of a given area does
not exceed the first positive Neumann eigenvalue on a disk of a
twice smaller area. This estimate is sharp and attained by a
sequence of domains degenerating to a union of two identical
disks. In particular, this result implies the Polya conjecture for the
second Neumann eigenvalue. The proof is based on a combination of
analytic and topological arguments. As a by-product of our method
we obtain an upper bound on the second eigenvalue for conformally
round metrics on odd-dimensional spheres.
\end{abstract}
\maketitle
%\tableofcontents

\section{Introduction and main results}
\subsection{Neumann eigenvalues of planar domains} \label{intro1}
Let $\Omega$ be a bounded planar domain. The domain $\Omega$ is said
to be {\it regular} if the spectrum of the Neumann boundary value
problem on $\Omega$ is discrete. This is true, for instance, if
$\Omega$ satisfies the cone condition, that is there are no outward
pointing cusps (see \cite{NS} for more refined conditions and a
detailed discussion).

Let $0=\mu_0<\mu_1(\Omega)\leq\mu_2(\Omega)\leq\cdots\nearrow\infty$
be the Neumann eigenvalues of a regular domain $\Omega$. According
to a classical result of Szegö (\cite{Szego1}, see also \cite[p.
137]{SY}, \cite[section 7.1]{Henrot}), for any regular
simply-connected domain $\Omega$
\begin{equation}
\label{szego} \mu_1(\Omega) \, \area(\Omega)  \le \mu_1({\mathbb D})
\pi \approx 3.39\, \pi, \end{equation}
 where ${\mathbb D}$ is the unit disk, and
$\mu_1(\mathbb{D})$ is the square of the first zero of the
derivative $J_1'(x)$ of the first Bessel function of the first type.
The proof of Szeg\"o's theorem relies on the Riemann mapping theorem
and hence works only if $\Omega$ is simply-connected. However,
inequality \eqref{szego} holds without this assumption, as was later
shown by Weinberger \cite{Wein}.

The P\'olya conjecture for Neumann eigenvalues \cite{Polya1} (see
also \cite[p. 139]{SY}) states that for any regular bounded domain
$\Omega$
\begin{equation}
\label{polya} \mu_k(\Omega)\, \area(\Omega) \le 4 k\, \pi\,
\end{equation}
for all $k\ge 1$. This inequality is true for all domains that tile
the plane, e.g., for any triangle and any quadrilateral
\cite{Polya2}. It follows from the two-term asymptotics for the
eigenvalue counting function (\cite{Ivrii}, \cite{Melrose}) that for
any domain there exists a number $K$ such that \eqref{polya} holds
for all $k>K$.

Inequality \eqref{szego} implies that \eqref{polya} is true for
$\mu_1$. The best one could show for $k\ge 2$ was  $\mu_k \le 8\pi
k$ (\cite{Kroger}). In the present paper we consider the case $k=2$.
Our main result is
\begin{theorem}\label{maintheorem}
\label{main} Let $\Omega$ be a regular simply-connected planar
domain. Then
\begin{equation}
\label{main:bound} \mu_2(\Omega)\, \area(\Omega)\leq 2\,
\mu_1(\mathbb{D})\,\pi \approx 6.78 \, \pi,
\end{equation}
with the equality attained in the limit by a family of domains
degenerating to a disjoint union of two identical disks.
\end{theorem}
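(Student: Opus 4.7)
By the Riemann mapping theorem, fix a conformal diffeomorphism $\phi:\mathbb{D}\to\Omega$ and set $\rho=|\phi'|^2$; the conformal invariance of the Dirichlet energy in dimension two identifies the Neumann spectrum of $\Omega$ with that of the weighted problem $-\Delta f=\mu\rho f$ on $\mathbb{D}$ with Neumann boundary. By the Courant--Fischer characterization,
\[
\mu_2(\Omega)\;\le\;\sup_{f\in V\setminus\{0\}}\frac{\int_{\mathbb{D}}|\nabla f|^2\,dA}{\int_{\mathbb{D}} f^2\,\rho\,dA}
\]
for any three-dimensional subspace $V\subset H^1(\mathbb{D})$. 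The plan is to construct two test functions $f_1,f_2$ encoding the two-disk geometry that saturates \eqref{main:bound} and, taking $V=\mathrm{span}(1,f_1,f_2)$, to bound this supremum by $2\mu_1(\mathbb{D})\pi/\area(\Omega)$.

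\textbf{Two-centre Möbius test functions.} Let $\psi(r,\theta)=J_1(\alpha_1 r)\cos\theta$ be a first Neumann eigenfunction of $\mathbb{D}$, with $\alpha_1^2=\mu_1(\mathbb{D})$. For a Möbius automorphism $m$ of $\mathbb{D}$, set $f_m=\psi\circ m^{-1}$; conformal invariance of the Dirichlet energy gives
\[
\int_{\mathbb{D}}|\nabla f_m|^2\,dA\;=\;\mu_1(\mathbb{D})\int_{\mathbb{D}}\psi^2\,dA,
\]
independently of $m$. I would use a pair $f_{m_1},f_{m_2}$ whose poles $a_1,a_2\in\overline{\mathbb{D}}$ model the centres of the two extremal disks: as $a_1,a_2$ approach antipodal boundary points of $\mathbb{D}$ and $\Omega$ degenerates to a union of two disks, each $f_{m_i}$ concentrates its $\rho$-mass on one of the two halves, so the Rayleigh quotient of each approaches the first eigenvalue of a disk of half the total area.

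\textbf{Topological balancing.} The core step is to choose $(m_1,m_2)$ so that the balancing conditions $\int f_{m_i}\rho\,dA=0$ for $i=1,2$ hold (making the $f_{m_i}$ orthogonal to the constants in $L^2(\rho)$), and so that the resulting span is legitimate for Courant--Fischer. This is carried out by a degree / fixed-point argument generalising Hersch's single-Möbius trick: encode the constraints as a continuous map from a suitable compactification of the pair-parameter space (modulo the involution swapping the centres) to a Euclidean space, and produce a zero via a topological obstruction. The extra symmetry coming from the swap of $m_1$ and $m_2$ is essential in reducing the codimension of the constraint.

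\textbf{Rayleigh estimate, obstacles, and sharpness.} Once the pair is balanced, the variational inequality together with the uniform Dirichlet identity yield a bound of the form
\[
\mu_2(\Omega)\cdot\lambda_{\min}(M)\;\le\;2\mu_1(\mathbb{D})\int_{\mathbb{D}}\psi^2\,dA,
\]
where $M$ is the $2\times 2$ Gram matrix with entries $M_{ij}=\int_{\mathbb{D}} f_{m_i}f_{m_j}\rho\,dA$. The proof is then completed by a rearrangement-type estimate $\lambda_{\min}(M)\ge \area(\Omega)\int\psi^2/\pi$, obtained by comparing the symmetric combination $|f_{m_1}|^2+|f_{m_2}|^2$ to the distribution of $\rho$ when the poles are antipodally placed. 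The hard part is the topological balancing, since the pair-parameter space degenerates along two strata: when $a_1=a_2$ the two test functions collapse into a single one and the conclusion reduces to Szeg\H o's inequality \eqref{szego}, which is strictly stronger than \eqref{main:bound}; when some $a_i\to\partial\mathbb{D}$, a concentration-compactness analysis must identify the limit as the extremal two-disk configuration, where the bound is saturated. Sharpness of \eqref{main:bound} is exhibited by the family $\Omega_\varepsilon$ consisting of two disks of area $\tfrac12\area(\Omega)$ joined by a neck of width $\varepsilon\to 0$, whose $\mu_2$ converges to the first Neumann eigenvalue of a single half-area disk.
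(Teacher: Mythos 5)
Your overall frame (Riemann mapping, conformal invariance of the Dirichlet energy, min--max with Möbius-transplanted first Neumann eigenfunctions of the disk, a topological selection of the transplantation, and a Szeg\H{o}-type mass comparison, plus the standard two-disks-with-a-neck family for sharpness) is the right flavor, but the two steps on which your argument actually rests are only asserted, and the specific mechanism you propose is weaker than what is needed. First, your ``topological balancing'' only imposes $\int f_{m_i}\rho\,dA=0$, i.e.\ orthogonality to constants. That is essentially Hersch's renormalization and is the easy part; it gives no lower bound whatsoever on the quantities $\int f_{m_i}^2\rho\,dA$ or on $\lambda_{\min}(M)$. Indeed, if $\rho$ concentrates near a single point $q$, the balancing conditions force each $f_{m_i}$ to have nearly vanishing $\rho$-mean near $q$, and nothing in your construction prevents the Gram matrix from degenerating; your claimed inequality $\lambda_{\min}(M)\ge \area(\Omega)\int\psi^2/\pi$ is exactly the crux of the theorem and is left unproved (and, as stated for arbitrary balanced pole pairs, it is not believable). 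Relatedly, your ``uniform Dirichlet identity'' holds for each $f_{m_i}$ separately but not for linear combinations, where cross terms $\int\nabla f_{m_1}\cdot\nabla f_{m_2}$ appear; this is repairable by Cauchy--Schwarz (which is presumably where your factor $2$ comes from), but it only sharpens the point that everything hinges on the unproved denominator bound.

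The paper's proof supplies precisely the two missing mechanisms, and they are different from what you propose. Instead of two separate transplanted bumps, one \emph{folds} the measure $d\mu=\phi^*(dz)$ across the geodesic boundary of a hyperbolic cap $a$ (so that no $\rho$-mass is ever lost: $d\mu_a=d\mu+\tau_a^*d\mu$ on $a$), renormalizes, and pulls back to the disk to get a rearranged measure $d\nu_a$ whose density is \emph{subharmonic}; Szeg\H{o}'s monotonicity argument then gives the lower bound $\int X_s^2\,d\nu_a\ge \pi\int_0^1 f^2 r\,dr$ (suitably normalized) --- but only for a maximizing direction $s$. To make this bound available for the whole two-dimensional test space one needs a cap for which $V(s)=\int X_s^2\,d\nu_a$ is \emph{independent of} $s$ (a ``multiple'' cap). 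The topological content of the proof is the existence of such a cap: using continuity of the unique renormalizing point and the flip-flop lemma (as $a$ shrinks to a boundary point $e^{i\theta}$, $d\nu_a\to R_p^*d\mu$), the map sending a cap to its maximizing direction would otherwise give a homotopy in $\mathbb{R}P^1$ between a constant loop and a non-contractible loop, a contradiction. The test functions are then the reflected lifts $\tilde u_a^s$ of $X_s\circ\psi_a^{-1}$, whose energy doubles (this is where the honest factor $2$ in \eqref{main:bound} comes from), and the isotropy of $d\nu_a$ plus subharmonicity give the denominator bound. So the gap in your proposal is concrete: you are missing the folding construction that prevents loss of mass, the subharmonicity argument that proves the denominator estimate, and a topological argument aimed at isotropy of the quadratic form $s\mapsto\int X_s^2\,d\nu$ rather than at mere vanishing of means.
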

The second part of the theorem immediately follows from \eqref{main:bound}. Indeed, if $\Omega$ is a disjoint union of two identical disks
then \eqref{main:bound} is an equality. Joining the two disks by a passage of width $\epsilon$ we can construct a family of
simply-connected domains such that the
left-hand side in \eqref{main:bound} tends to $2\mu_1(\mathbb{D})\pi$ as $\epsilon \to 0$.

Theorem \ref{main} gives a positive answer to a question of
Parnovski \cite{Par}, motivated by an analogous result proved in
\cite{Nad} for the second eigenvalue on
a sphere. %who asked whether the estimate
%\eqref{main:bound} holds.
%The proof of this result uses a
%modification of the``folding and rearrangement'' technique (see
%section \ref{section:folding}) introduced in \cite{Nad}, combined
%with the method of \cite{Szego1}.
Note that \eqref{main:bound} immediately implies \eqref{polya}  for
$k=2$ for any regular simply-connected planar domain.
\begin{remark} It would be interesting to check the  bound \eqref{main:bound}
for non-simply connected domains. We believe it remains true in this
case as well.
\end{remark}

\begin{remark} All estimates discussed in this section have analogues in the Dirichlet case.
For example,  \eqref{szego} is the Neumann counterpart of the
celebrated Faber-Krahn inequality (\cite{Faber, Krahn1}, see also
\cite[section 3.2]{Henrot}), which states that among all bounded
planar domains of a given area, the first Dirichlet eigenvalue is
minimal on a disk. Similarly, Theorem \ref{main} can be viewed as an
analogue of the result due to Krahn and Szeg\"o (\cite{Krahn2},
\cite[Theorem 4.1.1]{Henrot}), who proved that among bounded planar
domains of a given area, the second Dirichlet eigenvalue is
minimized by the union of two identical disks.
\end{remark}
\subsection{Eigenvalue estimates on spheres} \label{intro2}Let $({\mathbb S}^n, g)$
be a sphere of dimension $n\ge 2$ with a Riemannian metric $g$. Let
$$0 < \lambda_1({\mathbb S}^n, g) \le \lambda_2({\mathbb S}^n, g)\leq\cdots\nearrow\infty$$
be the eigenvalues of the Laplacian on $({\mathbb S}^n, g)$. Hersch
\cite{Hersch} adapted the approach of Szeg\"o to prove that
$\lambda_1({\mathbb S}^2, g)\, \area({\mathbb S}^2, g) \le 8\pi$ for
any Riemannian metric $g$, with the equality attained on a sphere
with the standard round metric $g_0$. In order to obtain a similar
estimate in higher dimensions, one needs to restrict the Riemannian
metrics to a fixed conformal class \cite{EI}. Indeed, in dimension
$\geq 3$, if one only restricts the volume, $\lambda_1$ is
unbounded~\cite{CD}. In particular, it was shown in \cite{EI} (see
also \cite{MaWu}) that for any metric $g$ in the class $[g_0]$ of
conformally round metrics,
\begin{equation}
\label{hersch:hd} \lambda_1({\mathbb S}^n, g)\, \vol({\mathbb S}^n,
g)^\frac{2}{n} \le n\, \omega_n^{2/n},
\end{equation}
where
$$
\omega_n=\frac{2\,\pi^{\frac{n+1}{2}}}{\Gamma\left(\frac{n+1}{2}\right)}
$$
is the volume of the unit round $n$-dimensional sphere. This
result can be viewed as a generalization of Hersch's inequality,
since all metrics on ${\mathbb S}^2$ are conformally equivalent to the
round metric $g_0.$

A similar problem for higher eigenvalues is much more complicated.
It was proved in \cite[Corollary 1]{ElCol} that
\begin{equation}
\label{conf} \lambda_k^c({\mathbb S}^n, [g_0]):=\sup_{g \in [g_0]}
\lambda_k({\mathbb S}^n, g)\, \vol({\mathbb S}^n, g)^\frac{2}{n} \ge
n\, (k\, \omega_n)^{2/n},
\end{equation}
The number $\lambda_k^c({\mathbb S}^n, [g_0])$ is called the  $k$-th
conformal eigenvalue of $({\mathbb S}^n, [g_0])$. It was shown in
\cite{Nad} that for $k=2$ and $n=2$ the inequality in \eqref{conf}
is an equality, and the supremum is attained by a sequence of
surfaces tending to a union of two identical round spheres. We
conjecture that the same is true in all dimensions:
\begin{conj}
\label{conj} The second conformal eigenvalue of $({\mathbb S}^n,
[g_0])$ equals
\begin{equation}
\label{Polya:sphere} \lambda_2^c({\mathbb S}^n, [g_0]) = n\, (2\,
\omega_n)^{2/n}
\end{equation}
for all $n\ge 2$.
\end{conj}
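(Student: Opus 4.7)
The lower bound $\lambda_2^c(S^n,[g_0]) \geq n(2\omega_n)^{2/n}$ is already contained in \eqref{conf}; moreover, it is attained in the limit by a sequence of conformally round metrics degenerating to two identical unit round spheres joined by a shrinking tube. The content of the conjecture is therefore the matching upper bound
\begin{equation*}
\lambda_2(S^n,g)\, \vol(S^n,g)^{2/n} \leq n(2\omega_n)^{2/n}\qquad \text{for every } g\in[g_0].
\end{equation*}

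My plan adapts the two-step Hersch--Nadirashvili strategy behind Theorem \ref{main} and its $S^2$ precursor in \cite{Nad}. Fix $g = e^{2\phi}g_0$ with $\vol(S^n,g) = 1$ and let $\mu = dv_g$, a probability measure on $S^n$. The first step is to produce a M\"obius transformation $\tau \in \mathrm{Conf}(S^n,g_0)$ and a great $(n-1)$-sphere $\Sigma \subset S^n$ splitting $S^n$ into hemispheres $H^{\pm}$ such that $(\tau_*\mu)(H^{+}) = (\tau_*\mu)(H^{-}) = 1/2$ and the center of mass of $\tau_*\mu$ lies on the axis of $\Sigma$. The conformal group of $(S^n,g_0)$ modulo rotations is naturally an open ball $B^{n+1}$, and as $\tau \to \partial B^{n+1}$ the pushforward $\tau_*\mu$ concentrates at a single point. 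Encoding the balance and center-of-mass conditions as a continuous map $B^{n+1} \to \mathbb{R}^{n+1}$, a Brouwer-degree argument fed by this boundary concentration should force a zero in the interior, yielding the desired $\tau$.

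The second step transplants to each $H^{\pm}$ the first nontrivial Neumann eigenfunction of a round spherical cap of volume $1/2$, extended by zero across $\Sigma$. The resulting test functions $f_{\pm}$ are $L^2(\mu)$-orthogonal and orthogonal to constants, so by min-max one has $\lambda_2(S^n,g) \leq \max_{\pm} R_g(f_{\pm})$, where $R_g$ is the Rayleigh quotient with respect to $g$; the target is $R_g(f_{\pm}) \leq n(2\omega_n)^{2/n}$. In dimension $2$ the Dirichlet integral is conformally invariant, which is precisely why \cite{Nad} treats $S^2$ directly after the normalization; for $n\geq 3$ the Dirichlet integral transforms under $g = e^{2\phi}g_0$ by the weight $e^{(n-2)\phi}$, so one must compare $\int |\nabla f_{\pm}|^2 \, dv_g$ with $\int |\nabla f_{\pm}|^2 \, dv_{g_0}$ through this factor.

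The main obstacle is exactly this Rayleigh-quotient comparison. A straightforward H\"older estimate costs a dimensional constant $C(n)>1$ and yields only a suboptimal bound. The abstract indicates that a sharp estimate is accessible when $n$ is odd, presumably because writing $|\nabla f_{\pm}|_g^2$ in ambient $\mathbb{R}^{n+1}$ coordinates produces a trigonometric sum obeying an identity peculiar to odd dimensions; my plan would be to reproduce that identity and then to try to extend it to even $n\geq 4$, perhaps by a zonal-harmonics/Funk--Hecke computation on $S^n$. I expect the extension to even dimensions above two to be genuinely hard and likely to require new input — either a refined concentration-compactness analysis along maximizing sequences, or a quantitative study of the bubbling profile at a limit metric — and this is why the full conjecture appears to remain out of reach of a direct analogue of the planar argument.
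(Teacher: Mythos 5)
The statement you are addressing is a conjecture: the paper does not prove \eqref{Polya:sphere}, and neither does your proposal --- as you yourself concede in the last paragraph, the argument is a plan whose crucial step is left open. What the paper actually establishes is only Theorem \ref{sphere}, a non-sharp upper bound for odd $n$ obtained via the modified Rayleigh quotient $R'$, with a constant strictly larger than $n(2\omega_n)^{2/n}$ (Remark \ref{whynotsharp}). So at best your text is a research outline, not a proof, and it should be assessed as such.

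Beyond that, two concrete points in the outline are off. First, your second step is not admissible as stated: the first nontrivial Neumann eigenfunction of a spherical cap does not vanish on the boundary of the cap (only its normal derivative does), so extending it by zero across $\Sigma$ does not produce an $H^1$ function; the Hersch--Nadirashvili machinery instead lifts test functions by the conformal reflection $\tau_a$ across the cap boundary (Definition \ref{defLift}), and the sharp control of the denominator comes not from a half-half mass balance but from renormalization plus the multiplicity of the maximizing direction (Proposition \ref{sphericalmultexist}) together with the estimate \eqref{denomin}; for an arbitrary conformal factor, balancing the mass between two hemispheres gives no lower bound on $\int (\tilde u)^2\,dg$ for your transplanted functions. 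Second, your diagnosis of the odd-dimension restriction is misplaced: in the paper the parity of $n$ enters only through the degree computation $\deg(\phi)=4\neq 0$ in the topological argument (Remark \ref{whyodd}), not through any trigonometric identity making the Rayleigh-quotient comparison sharp; even for odd $n$ the conformal non-invariance of the Dirichlet energy forces the H\"older loss in $R'$, which is exactly why the known bound \eqref{sphere:bound} misses the conjectured constant. Thus the ``main obstacle'' you identify is real, but it is not resolved for odd $n$ either, and the conjecture remains open in all dimensions $n\ge 3$.
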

As a by-product of the method developed for the proof of Theorem
\ref{main}, we prove un upper bound for $\lambda_2^c({\mathbb S}^n,
[g_0])$ when the dimension $n$ is {\it odd} (this condition is
explained in Remark \ref{whyodd}). Our result is in good agreement
with Conjecture \ref{conj}.
\begin{theorem}
\label{sphere} Let $n \in \mathbb{N}$ be odd and let $({\mathbb
S}^n, g)$ be a $n$-dimensional sphere with a conformally round
metric $g \in [g_0]$. Then
\begin{equation}
\label{sphere:bound} \lambda_2({\mathbb S}^n, g)\, \vol({\mathbb
S}^n, g)^\frac{2}{n} <
(n+1)\left(\frac{4\pi^{\frac{n+1}{2}}\Gamma(n)}{\Gamma(\frac{n}{2})\Gamma(n+\frac{1}{2})}\right)^{2/n}
\end{equation}
\end{theorem}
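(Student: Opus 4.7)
The plan is to adapt the Hersch--El Soufi--Ilias test-function method that underlies the first-eigenvalue bound \eqref{hersch:hd}, combined with a topological splitting argument of the sort used in Theorem~\ref{maintheorem} and in the two-dimensional predecessor \cite{Nad}. By the variational characterization of $\lambda_2$, it suffices to exhibit, for any metric $g=e^{2\psi}g_0$ on $S^n$, a two-dimensional subspace $V\subset H^1(S^n,g)$ orthogonal to constants on which the Rayleigh quotient is bounded by the right-hand side of \eqref{sphere:bound} divided by $\vol(S^n,g)^{2/n}$.

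For the trial functions, I would use cap functions generated by M\"obius dilations. For each $(p,t)\in S^n\times[0,1)$ let $\varphi_{p,t}\in\mathrm{Conf}(S^n)$ be the conformal dilation fixing $\pm p$ with hyperbolic parameter $t$, and let $R_p$ rotate $p$ to the north pole. Setting $F_{p,t}=R_p\circ\varphi_{p,t}$, consider
\[
h_{p,t}=(x_{n+1}\circ F_{p,t})\,\chi_{D_{p,t}},\qquad D_{p,t}=F_{p,t}^{-1}(\{x_{n+1}>0\}),
\]
which lies in $H^1(S^n)$ (vanishing on $\partial D_{p,t}$) and is supported on the cap $D_{p,t}$. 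Two such functions whose caps are disjoint are automatically $L^2(g)$-orthogonal and energy-orthogonal. Forcing both to be orthogonal to constants amounts to solving $\int h_i\,dv_g=0$ for $i=1,2$ simultaneously; this is a Brouwer-degree or Borsuk--Ulam-type problem on the parameter space of disjoint-cap configurations. The parity assumption on $n$ is expected to enter precisely here, because only for odd $n$ does $S^n$ admit a nowhere-vanishing tangent vector field, a feature that clears the topological obstruction which would otherwise block the degree computation in even dimension.

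The Rayleigh quotient estimate mirrors the proof of \eqref{hersch:hd}. Writing $F_{p_i,t_i}^{*}g_0=\rho_i^{\,2}g_0$, the identity $|\nabla(x_{n+1}\circ F_{p_i,t_i})|_{g_0}^{2}=\rho_i^{\,2}\bigl(1-(x_{n+1}\circ F_{p_i,t_i})^{2}\bigr)\le\rho_i^{\,2}$ together with H\"older's inequality yields
\[
\int_{D_i}|\nabla h_i|_g^{2}\,dv_g\le\left(\int_{D_i}\rho_i^{\,n}\,dv_{g_0}\right)^{\!2/n}\vol_g(D_i)^{(n-2)/n}=(\omega_n/2)^{2/n}\vol_g(D_i)^{(n-2)/n},
\]
since the $g_0$-volume of the image hemisphere $F_{p_i,t_i}(D_i)=\{x_{n+1}>0\}$ equals $\omega_n/2$. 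A matching lower bound on $\int_{D_i}h_i^{\,2}\,dv_g$, together with an averaging over the $n+1$ coordinate directions accounting for the prefactor $(n+1)$, completes the estimate; the explicit beta-function value of the round integral $\int_{\{x_{n+1}>0\}}x_{n+1}^{\,2}\,dv_{g_0}$ then produces in odd dimension the precise constant $\tfrac{4\pi^{(n+1)/2}\Gamma(n)}{\Gamma(n/2)\Gamma(n+1/2)}$ rather than the conjecturally optimal $2\omega_n$, which is the source of the gap between \eqref{sphere:bound} and Conjecture~\ref{conj}.

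The principal obstacle is the topological step: securing a continuous family of parameter configurations solving both mean-zero conditions simultaneously, while ruling out the boundary configurations in which the two caps collide or collapse, is delicate, and it is exactly here that odd parity of $n$ is indispensable. The strict inequality in \eqref{sphere:bound} should then follow from tracking the equality cases in H\"older and Cauchy--Schwarz, which cannot both hold along a non-degenerate smooth metric.
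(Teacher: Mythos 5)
There is a genuine gap, and it occurs exactly at the step you yourself flag as the crux. Your trial functions $h_{p,t}=(x_{n+1}\circ F_{p,t})\,\chi_{D_{p,t}}$ are nonnegative and strictly positive on their supports (by construction $D_{p,t}$ is precisely the set where $x_{n+1}\circ F_{p,t}>0$), so $\int_{\mathbb{S}^n}h_{p,t}\,dv_g>0$ for \emph{every} choice of parameters and every smooth conformal metric. The two conditions $\int h_i\,dv_g=0$, $i=1,2$, which you propose to solve by a degree or Borsuk--Ulam argument, therefore have no solutions at all; no parity consideration can rescue this. With two disjointly supported functions of nonzero mean one can only extract a one-dimensional mean-zero subspace of their span, which bounds $\lambda_1$, not $\lambda_2$ in the characterization \eqref{varicharact1}. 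Moreover, even if orthogonality were available, the ``matching lower bound on $\int_{D_i}h_i^2\,dv_g$'' that you defer is the real difficulty: for a general $g\in[g_0]$ the measure $dv_g$ may concentrate far from your caps or near their boundaries where $h_i$ is small, and nothing in your construction prevents the denominators from collapsing.

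The paper avoids both problems by a different mechanism, which is not optional here. Instead of cutting off, it uses a \emph{single} cap $a$ and test functions that are lifts $\tilde u_a^s$ defined on all of $\mathbb{S}^n$ by reflecting across $\partial a$ with the conformal reflection $\tau_a$; orthogonality to constants is automatic because the folded measure $d\mu_a$ is renormalized by a M\"obius map (Proposition~\ref{renormsphere}), giving $\int\tilde u_a^s\,dg=\int X_s\,d\nu_a=0$ for all $s$. The lower bound on the denominator is exactly what the topological argument produces: by the degree computation in Proposition~\ref{sphericalmultexist} (the map $p\mapsto[R_pe_1]$ from $\mathbb{S}^n$ to $\mathbb{R}P^n$ has degree $4\neq0$ when $n$ is odd, hence cannot be null-homotopic), there is a cap whose rearranged measure is \emph{multiple}, i.e.\ admits a two-dimensional space $W$ of maximizing directions, and then $\int(\tilde u_a^s)^2\,dg\ge\frac{1}{n+1}$ for $s\in W$. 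This degree computation, which fails in even dimensions, is the only place where oddness of $n$ enters; your appeal to nowhere-vanishing vector fields does not substitute for it. Your use of H\"older to pass to the conformally invariant $n$-energy is the right idea (it is the modified Rayleigh quotient of \cite{FN} used in Section~\ref{modR}), but your numerator bound $|\nabla(x_{n+1}\circ F)|^2\le\rho^2$ discards the factor $\bigl(1-(s,p)^2\bigr)^{n/2}$ whose integral $K_n=\frac{2\pi^{(n+1)/2}\Gamma(n)}{\Gamma(\frac{n}{2})\Gamma(n+\frac12)}$ is the actual source of the constant in \eqref{sphere:bound}; it does not come from $\int_{\{x_{n+1}>0\}}x_{n+1}^2\,dg_0$ as you assert. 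As written, the scheme does not assemble into a proof of the theorem.
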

\begin{remark}
Note that the Dirichlet energy is not conformally invariant in
dimensions $n \ge 3$ and therefore to prove Theorem \ref{sphere} we
have to work with the modified Rayleigh quotient (cf. \cite{FN}).
This is in fact the reason why we do not get a sharp bound (see
Remark \ref{whynotsharp}). At the same time, the estimate
\eqref{sphere:bound} is just a little bit weaker than the
conjectured bound \eqref{Polya:sphere}: one can check numerically
that the ratio of the constants at the right-hand sides of
\eqref{sphere:bound} and \eqref{Polya:sphere} is contained in the
interval $(1,1.04)$ for all $n$. Moreover, the difference between
the two constants tends to $0$ as the dimension $n \to \infty$, and
hence \eqref{sphere:bound} is ``asymptotically sharp'' as follows
from \eqref{conf}.
\end{remark}
\begin{remark}
It was conjectured in \cite{Nad} that if $n=2$ then \eqref{conf} is
an equality for all $k \ge 1$, with the maximizer given by the union
of $k$ identical round spheres. One could view it as an analogue of
the P\'olya conjecture \eqref{polya} for the sphere. Note that a
similar ``naive'' guess about the maximizer of the $k$-th Neumann
eigenvalue of a planar domain is false: a union of $k$ equal disks
can not maximize $\mu_k$ for all $k \ge 1$, because, as one could
easily check, this would contradict Weyl's law. For the same reason,
\eqref{conf} can not be an equality for all $k\ge 1$ in dimensions
$n\ge 5$.
\end{remark}

\subsection{Plan of the paper} The paper is organized as follows. In
sections 2.1--2.5 we develop the ``folding and rearrangement''
technique based on the ideas of \cite{Nad} and apply it to planar
domains. The topological argument used in  the proof of Theorem
\ref{maintheorem} is presented in section 2.6. In section 2.7 we
complete the proof of the main theorem using some facts about the
subharmonic functions. In sections 3.1 and 3.2 we prove the
auxiliary lemmas used in the proof of Theorem \ref{maintheorem}. In
section 4.1 we present a somewhat stronger version of the classical
Hersch's lemma (\cite{Hersch}). %\cite[p. 144]{SY}).
In sections 4.2
and 4.3 we adapt the approach developed in sections 2.1-2.7 for the
case of the sphere. In section 4.4 we use the modified Rayleigh
quotient to complete the proof of Theorem \ref{sphere}.

\subsection*{Acknowledgments} We are very grateful to L.~Parnovski
for a stimulating question that has lead us to Theorem \ref{main},
and to M.~Levitin for many useful discussions on this project. We
would also like to thank B.~Colbois and L.~Polterovich for helpful
remarks.

\section{Proof of Theorem \ref{maintheorem}}
\subsection{Standard eigenfunctions for $\mu_1$ on the disk} Let
$$\mathbb{D}=\left\{z\in\mathbb{C}\, \bigl|\bigr.\, |z|<1\right\}$$ be the open unit
disk.  Let $J_1$ be the first Bessel function of the first kind, and
let $\zeta \approx 1.84$ be the smallest positive zero of the
derivative $J_1'$.
Set
$$f(r)=J_1(\zeta r).$$ Given $R\geq 0$ and
$s=(R\cos\alpha, R\sin\alpha)\in\mathbb{R}^2,$ define
$X_s:\mathbb{D}\rightarrow\mathbb{R}$ by
\begin{equation}
\label{Xs}
 X_s(z)=f(|z|)\frac{z\cdot
s}{|z|}=Rf(r)\cos(\theta-\alpha),
\end{equation}
where
$r=|z|$, $\theta=\arg z$, and $z\cdot s$ denotes the scalar product
in $\mathbb{R}^2$. The functions $X_s$ are the Neumann
eigenfunctions corresponding to the double eigenvalue
$$\mu_1(\mathbb{D})=\mu_2(\mathbb{D})=\zeta^2\approx 3.39.$$ The
functions $X_{e_1}$ and $X_{e_2}$ form a basis for this space of
eigenfunctions (where the vectors $\{e_1, e_2\}$ form the standard
basis of $\mathbb{R}^2$).

\subsection{Renormalization of measure}
We say that a conformal transformation $T$ of the disk {\it
renormalizes} a measure $d\nu$ if for each $s\in\mathbb{R}^2$,
  \begin{gather}\label{DefTrenormalizes}
    \int_{\mathbb{D}}X_s\circ T\,d\nu=0.
  \end{gather}
% It may (or may not) help to rewrite this equation
% $$\int_{\mathbb{D}}\left(X_{e_1},X_{e_2}\right) \,T_*d\nu=0\in\mathbb{D}$$
% where $T_*d\nu$ is the image of the measure $d\nu$ by the automorphism
% $T$. We think of this condition as stating the ``center of mass'' of
% $T_*d\nu$ is at the origin.

Finite signed measures on $\mathbb{D}$ can be seen as elements of the
dual of the space $C(\overline{\mathbb{D}})$ of continuous
functions. As such, the norm of a measure $d\nu$ is
\begin{equation}
\label{norm}
  \|d\nu\|=\sup_{f\in C(\overline{\mathbb{D}}), |f|\le 1} \left|\int_{\mathbb{D}}f\,d\nu\, \right|
\end{equation}

The following result is an analogue of  Hersch's lemma (see
~\cite{Hersch},~\cite{SY}).
\begin{lemma}\label{renormalization}
For any finite measure $d\nu$ on $\mathbb{D}$ there exists a
    point $\xi \in\mathbb{D}$ such that $d\nu$ is renormalized by the automorphism
    $d_\xi:\mathbb{D}\rightarrow\mathbb{D}$ defined by
    $$d_\xi(z)=\frac{z+\xi}{\overline{\xi}z+1}.$$
\end{lemma}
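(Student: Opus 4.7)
The plan is to reduce the renormalization condition to finding a zero of a continuous vector-valued map on $\overline{\mathbb{D}}$, and then close with a topological degree argument, in the spirit of Hersch's original proof. Since $X_s(z)=f(|z|)(z\cdot s)/|z|$ is linear in $s$, condition \eqref{DefTrenormalizes} with $T=d_\xi$ holds for every $s\in\mathbb{R}^2$ if and only if the vector
\begin{equation*}
F(\xi) := \left(\int_{\mathbb{D}} X_{e_1}\circ d_\xi\,d\nu,\ \int_{\mathbb{D}} X_{e_2}\circ d_\xi\,d\nu\right)
\end{equation*}
vanishes. Thus I need to produce $\xi\in\mathbb{D}$ with $F(\xi)=0$.

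Joint continuity of $(\xi,z)\mapsto d_\xi(z)$ together with the uniform bound $|X_s\circ d_\xi(z)|\le |s|\max_{[0,1]}|f|$ makes $F$ continuous on $\mathbb{D}$ by dominated convergence. The key step is to analyze the behavior of $F$ near the boundary. For $\eta\in\partial\mathbb{D}$, the identity $z+\eta=\eta(\bar\eta z+1)$, which uses only $|\eta|=1$, shows that $d_\xi(z)\to\eta$ as $\xi\to\eta$ for every fixed $z\in\mathbb{D}$. Dominated convergence then gives
\begin{equation*}
\lim_{\xi\to\eta}F(\xi) = f(1)\,\nu(\mathbb{D})\,\eta,
\end{equation*}
so that setting $F(\eta):=f(1)\,\nu(\mathbb{D})\,\eta$ for $\eta\in\partial\mathbb{D}$ extends $F$ continuously to all of $\overline{\mathbb{D}}$.

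Assuming $\nu$ is not the zero measure (otherwise any $\xi$ works) and noting that $f(1)=J_1(\zeta)\neq 0$ (since $J_1$ and $J_1'$ cannot vanish simultaneously, by uniqueness for Bessel's equation), the boundary restriction of $F$ is a nonvanishing map from $\partial\mathbb{D}$ to $\mathbb{R}^2$ of winding number $\pm 1$ about the origin. If $F$ had no zero in $\overline{\mathbb{D}}$, then $F/|F|$ would yield a continuous retraction of $\overline{\mathbb{D}}$ onto $S^1$, a contradiction. Hence $F(\xi_0)=0$ for some $\xi_0\in\overline{\mathbb{D}}$, and since $F$ is nonzero on $\partial\mathbb{D}$, this $\xi_0$ necessarily lies in $\mathbb{D}$, as required.

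The main subtlety is the continuous extension to $\partial\mathbb{D}$: one must ensure that $F(\xi)$ has the correct limit regardless of how $\xi$ approaches the boundary, without being obscured by mass of $\nu$ concentrating near the moving pole $z=-1/\bar\xi$. The clean algebraic collapse $d_\eta\equiv\eta$ for $|\eta|=1$, combined with the fact that the pole sits outside $\overline{\mathbb{D}}$ for every $\xi\in\mathbb{D}$, makes the dominated convergence step transparent and removes this potential obstacle.
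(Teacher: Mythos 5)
Your proof is correct and follows essentially the same route as the paper: you establish pointwise convergence $d_\xi(z)\to\eta$ as $\xi\to\eta\in\partial\mathbb{D}$, extend the vector-valued map continuously to $\overline{\mathbb{D}}$ with boundary values proportional to the identity, and conclude by a Brouwer-type no-retraction (winding number) argument. The only difference is cosmetic: the paper normalizes by $1/(Mf(1))$ to view the map as $\overline{\mathbb{D}}\to\overline{\mathbb{D}}$ fixing the boundary and invokes surjectivity as in Hersch's lemma, whereas you keep the unnormalized field $F$ and argue via $F/|F|$; you also (helpfully) make explicit the trivial case $\nu=0$ and the fact $f(1)\neq 0$.
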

\begin{proof}
  \noindent
Set $M=\int_\mathbb{D}d\nu$ and define the continuous map
  $C:\mathbb{D}\rightarrow\mathbb{D}$ by
  \begin{align*}
      C(\xi&)=\frac{1}{M\,f(1)}\int_{\mathbb{D}}\left(X_{e_1},X_{e_2}\right)\,(d_{\xi})_*d\nu
      =\frac{1}{M\,f(1)}\int_{\mathbb{D}}\left(X_{e_1}\circ d_\xi,X_{e_2}\circ d_\xi\right)d\nu
  \end{align*}
  Let $e^{i\theta}\in S^1=\partial\mathbb{D}$.
  For any $z\in\mathbb{D}$,
  $$\lim_{\xi\rightarrow e^{i\theta}} d_\xi(z)=e^{i\theta}.$$
  This means that the map
  $C$ can be continuously extended to the closure $\overline{\mathbb{D}}$ by
  $C=\mbox{id}$ on $\partial\mathbb{D}$.
  By the same topological argument as in Hersch's lemma (and as in the
  proof of the Brouwer fixed point theorem), a continuous map
  $C:\overline{\mathbb{D}}\rightarrow\overline{\mathbb{D}}$ such
  that $C(\xi)=\xi$ for $\xi\in\partial\mathbb{D}$ must be
  onto. Hence, there exists some $\xi\in\mathbb{D}$ such that
  $C(\xi)=0\in\mathbb{D}$. \end{proof}

\begin{lemma}\label{uniqueness}
  For any finite measure $d\nu$ the renormalizing point $\xi$ is
  unique.
\end{lemma}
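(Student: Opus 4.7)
The plan is to prove uniqueness by a topological degree argument. The map $C:\overline{\mathbb{D}}\to\overline{\mathbb{D}}$ from the existence proof extends continuously to the identity on $\partial\mathbb{D}$ and therefore has topological degree $1$. If I can show that the real Jacobian $\det dC|_\xi$ is strictly positive at every $\xi\in\mathbb{D}$ with $C(\xi)=0$, then each preimage of $0$ contributes local index $+1$ and the local indices must sum to $1$, forcing the preimage to be a single point.

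First, I would reduce Jacobian positivity at an arbitrary renormalizing point $\xi_0$ to Jacobian positivity at the origin via a conformal covariance calculation. Setting $d\mu=(d_{\xi_0})_{*}d\nu$ and writing the automorphism $d_\xi\circ d_{-\xi_0}$ in canonical form $e^{i\phi(\xi)}d_{\eta(\xi)}$ with $\eta(\xi)=d_{-\xi_0}(\xi)$, the rotational covariance $\Phi(e^{i\phi}w)=e^{i\phi}\Phi(w)$ of the complex eigenfunction map $\Phi(w):=f(|w|)w/|w|$ yields the identity $C_\nu(\xi)=e^{i\phi(\xi)}C_\mu(\eta(\xi))$. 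At $\xi_0$ one has $\eta(\xi_0)=0$ and $\phi(\xi_0)=0$; if $\xi_0$ renormalizes $d\nu$ then $C_\mu(0)=0$ as well, so the Leibniz term $i\,d\phi\cdot C_\mu(0)$ drops out when differentiating, leaving $dC_\nu|_{\xi_0}=dC_\mu|_0\circ d\eta|_{\xi_0}$. Since $d\eta|_{\xi_0}$ is multiplication by the positive real scalar $(1-|\xi_0|^2)^{-1}$, it will suffice to prove $\det dC_\mu|_0>0$ for an arbitrary finite positive measure $\mu$ on $\mathbb{D}$.

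Next, a direct chain-rule computation of the Wirtinger derivatives at $\xi=0$ gives
\[
\partial_\xi C_\mu\big|_0=\frac{1}{Mf(1)}\int_{\mathbb{D}}A(|z|)\,d\mu(z),\qquad
\partial_{\bar\xi}C_\mu\big|_0=\frac{1}{Mf(1)}\int_{\mathbb{D}}(z/|z|)^2 B(|z|)\,d\mu(z),
\]
where $A(r)=\tfrac{1}{2}f'(r)(1-r^2)+\tfrac{1}{2r}f(r)(1+r^2)$ and $B(r)=\tfrac{1}{2}f'(r)(1-r^2)-\tfrac{1}{2r}f(r)(1+r^2)$. The real Jacobian of $C_\mu$ at $0$ equals $|\partial_\xi C_\mu|^2-|\partial_{\bar\xi}C_\mu|^2$, which is positive as soon as $\int A\,d\mu>\int|B|\,d\mu$. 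This will follow from the fortuitous factorizations
\[
A(r)+B(r)=f'(r)(1-r^2),\qquad A(r)-B(r)=\frac{f(r)(1+r^2)}{r},
\]
combined with the Bessel-theoretic facts that $f'(r)>0$ on $[0,1)$ (because $\zeta$ is the first positive zero of $J_1'$) and $f(r)>0$ on $(0,1]$ (because the first positive zero of $J_1$ exceeds $\zeta$). Both $A\pm B$ are therefore strictly positive on $(0,1)$, giving $A(r)>|B(r)|$ pointwise on $\mathbb{D}$; integrating against $\mu$, which gives no mass to $\partial\mathbb{D}$, then yields the required strict inequality.

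The hard part will be establishing the pointwise bound $A(r)>|B(r)|$: the clean factorizations of $A\pm B$ into two strictly positive Bessel-theoretic quantities are the essential structural input, and without them Jacobian positivity would not be apparent. Once the above three steps are in hand, degree theory concludes uniqueness immediately.
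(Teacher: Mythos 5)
Your proof is correct, but it takes a genuinely different route from the paper's. Both arguments share the reduction step: you conjugate by $d_{\xi_0}$ and use the identity $d_\xi\circ d_{-\xi_0}=c(\xi)\,d_{d_{-\xi_0}(\xi)}$ with $|c(\xi)|=1$, which is exactly the composition formula the paper also exploits; after that the mechanisms diverge. The paper reduces to an already renormalized measure and derives a contradiction from the strict pointwise inequality $X_s(d_\eta(z))>X_s(z)$ (Lemma \ref{aux1}, proved by an elementary but lengthy case analysis of moduli and arguments), so it uses no topology beyond the existence proof. You instead keep the topology (the Brouwer degree $\deg(C,\mathbb{D},0)=1$ because $C=\mathrm{id}$ on $\partial\mathbb{D}$) and supply the local input that every zero of $C$ is nondegenerate with positive Jacobian. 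Your computations check out: at a zero $\xi_0$ the unimodular factor drops out because $C_\mu(0)=0$, $d\eta|_{\xi_0}$ is multiplication by $(1-|\xi_0|^2)^{-1}$, and at the origin $\partial_\xi C_\mu$ integrates $A(r)=\tfrac12 f'(r)(1-r^2)+\tfrac{1}{2r}f(r)(1+r^2)$ while $\partial_{\bar\xi} C_\mu$ integrates $(z/|z|)^2B(|z|)$; the factorizations $A+B=f'(r)(1-r^2)$ and $A-B=f(r)(1+r^2)/r$, with $f'>0$ on $[0,1)$ and $f>0$ on $(0,1]$, give $A>|B|$ on $[0,1)$ (note $A(1)=|B(1)|$, so your observation that the measure charges only the open disk is genuinely needed for strictness). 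Two points you should make explicit in a full write-up: differentiation under the integral sign (routine, since the integrand and its $\xi$-derivatives are continuous and bounded for $\xi$ in compact subsets of $\mathbb{D}$, $z\in\mathbb{D}$, and the measure is finite), and the standard degree facts that a nonvanishing Jacobian makes the zeros isolated, hence finite in number by compactness of $C^{-1}(0)\subset\mathbb{D}$, so the index sum equals the boundary degree. As for what each approach buys: the paper's proof needs only continuity of $C$ but pays with the case analysis of Lemma \ref{aux1}; yours trades that for a clean Bessel positivity statement plus degree theory, and as a by-product shows the renormalizing point is a nondegenerate zero of index $+1$, which yields continuity (indeed smoothness) of $\Gamma$ via the implicit function theorem, strengthening Corollary \ref{cont}.
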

\begin{proof}
  \noindent
  First, let us show that if the measure $d\nu$ is already
  renormalized then $\xi=0$. Suppose that $\mathbb{D}\ni \eta \ne 0$
  renormalizes $d\nu$. Without loss of generality assume that
  $\eta$ is real and positive (if not, apply a rotation). Setting $s=1$, by
  Lemma \ref{aux1} we get that  $X_s(d_\eta(z))>X_s(z)$ for all $z\in
  \mathbb{D}$ and hence
  $$\int_{\mathbb{D}}X_s\circ d_\eta\,d\nu>\int_{\mathbb{D}}X_s\,d\nu=0,$$
  which contradicts the hypothesis that $\eta$ renormalizes $d\nu$.

  Now let $d\nu$ be an arbitrary finite measure which is renormalized
  by $\xi\in \mathbb{D}$. Assume $\eta\in \mathbb{D}$ also
  renormalizes $d\nu$. Let us show that $\eta=\xi$. Taking into
  account that $d_{-\xi}\circ d_\xi=d_0=\mbox{id}$, we can write
  $$(d_{\eta})_*d\nu=\left(d_{\eta}\circ d_{-\xi}\right)_*\left(d_{\xi}\right)_*d\nu.$$
  A straightforward computation shows that $$d_{\eta}\circ
  d_{-\xi}=\frac{1-\eta \bar \xi}{1-\bar \eta \xi} d_\alpha,$$ where
  $\alpha=d_{-\xi}(\eta)$ and
  $\left|\frac{1-\eta \bar \xi}{1-\bar \eta\xi}\right|=1$.
  This implies that $d\alpha$ renormalizes
  $\left(d_{\xi}\right)_*d\nu$ which is already renormalized.  Hence,
  as we have shown above, $\alpha=d_{-\xi}(\eta)=0$, and therefore
  $\xi=\eta$.
\end{proof}
Given a finite measure, we write
$\Gamma(d\nu)\in\mathbb{D}$
for its unique renormalizing point $\xi\in\mathbb{D}$.
\begin{corollary}
  The renormalizing point $\Gamma(d\nu)\in\mathbb{D}$ depends continuously on the
  measure $d\nu$.
  \label{cont}
\end{corollary}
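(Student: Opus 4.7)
The strategy is to combine sequential compactness of $\overline{\mathbb{D}}$ with the uniqueness statement of Lemma \ref{uniqueness} via a contradiction argument. Concretely, I would assume a sequence $d\nu_n$ converges to $d\nu$ in the dual norm defined in \eqref{norm}, write $\xi_n = \Gamma(d\nu_n)$ and $\xi = \Gamma(d\nu)$, and suppose for contradiction that $\xi_n \not\to \xi$. By compactness, some subsequence converges to a point $\xi^* \in \overline{\mathbb{D}}$ with $\xi^* \neq \xi$, and the task is to rule this out by passing to the limit in the defining identity
$$\int_{\mathbb{D}} X_s \circ d_{\xi_n}\, d\nu_n = 0, \qquad s \in \mathbb{R}^2.$$

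If $\xi^* \in \mathbb{D}$, the Möbius automorphisms $d_{\xi_n}$ converge to $d_{\xi^*}$ uniformly on $\overline{\mathbb{D}}$, hence so do the continuous functions $X_s \circ d_{\xi_n}$. Combined with the norm convergence of $d\nu_n$, this lets one pass to the limit to obtain $\int X_s \circ d_{\xi^*}\, d\nu = 0$ for every $s$. Thus $\xi^*$ renormalizes $d\nu$, and Lemma \ref{uniqueness} forces $\xi^* = \xi$, a contradiction. If instead $\xi^* \in \partial \mathbb{D}$, the automorphism $d_{\xi_n}$ has a pole at $-1/\overline{\xi_n}$ that tends to the boundary point $-\xi^*$; for every $z \in \mathbb{D}$ one nevertheless has $d_{\xi_n}(z) \to \xi^*$, with uniform convergence on compact subsets of $\overline{\mathbb{D}} \setminus \{-\xi^*\}$ and a global uniform bound $|X_s \circ d_{\xi_n}| \leq \|X_s\|_\infty$. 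Splitting the integral into a small neighborhood of $-\xi^*$ and its complement, and using the norm bound $|\nu_n - \nu|(A) \leq \|d\nu_n - d\nu\|$ together with regularity of $|\nu|$ to control the mass near $-\xi^*$ uniformly in $n$, yields
$$0 \;=\; \lim_{n\to\infty} \int_{\mathbb{D}} X_s \circ d_{\xi_n}\, d\nu_n \;=\; X_s(\xi^*)\, \nu(\mathbb{D}).$$
Since $\nu(\mathbb{D}) = M \neq 0$ (as is implicit in Lemma \ref{renormalization}), this forces $X_s(\xi^*) = 0$ for all $s$. Writing $\xi^* = e^{i\theta_0}$ and choosing $s = (\cos\theta_0, \sin\theta_0)$, the formula \eqref{Xs} gives $X_s(\xi^*) = f(1) = J_1(\zeta) \neq 0$ (the first positive zero of $J_1'$ strictly precedes the first positive zero of $J_1$), the desired contradiction.

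The main obstacle is the boundary case, where the family $\{d_{\xi_n}\}$ degenerates as $\xi_n$ approaches $\partial \mathbb{D}$: the pole $-1/\overline{\xi_n}$ tends to $-\xi^*$ from just outside the disk, so $X_s \circ d_{\xi_n}$ does not converge uniformly on all of $\overline{\mathbb{D}}$. Handling this requires localizing the passage to the limit away from $-\xi^*$, and it is crucial that $\nu(\mathbb{D}) \neq 0$ so that the limit identity $X_s(\xi^*)\, M = 0$ gives usable information about $X_s(\xi^*)$ rather than collapsing trivially.
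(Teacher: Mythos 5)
Your proof is correct and follows essentially the same route as the paper: extract a convergent subsequence of renormalizing points, pass to the limit in $\int_{\mathbb{D}}X_s\circ d_{\xi_n}\,d\nu_n=0$ using the norm convergence of the measures, and conclude via the uniqueness of Lemma \ref{uniqueness}. The only difference is that you spell out the degenerate case $\xi^*\in\partial\mathbb{D}$, where the limit identity becomes $X_s(\xi^*)\,\nu(\mathbb{D})=0$ and is ruled out because $f(1)=J_1(\zeta)\neq 0$; this step is left implicit in the paper's argument.
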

\begin{proof}
  \noindent
  Let $(d\nu_n)$ be a sequence of measures converging to the measure
  $d\nu$ in the norm \eqref{norm}. Without loss of generality suppose that $d\nu$ is
  renormalized. Let $\xi_n\in\mathbb{D}\subset\overline{\mathbb{D}}$
  be the unique element such that $d_{\xi_n}$ renormalizes $d\nu_n$.
  Let $(\xi_{n_k})$ be a convergent subsequence, say to
  $\xi\in\overline{\mathbb{D}}$.
  Now, by definition of $\xi_n$ there holds
  \begin{align*}
    0=\lim_{k\rightarrow\infty}|\int_{\mathbb{D}}X_{s}\,(d_{\xi_{n_k}})_*d\nu_{n_k}|=
    |\int_{\mathbb{D}}X_{s}\,(d_{\xi})_*d\nu|,
  \end{align*}
  and hence $d_\xi$ renormalizes $d\nu$. Since we assumed that $d\nu$
  is normalized, by uniqueness we get $\xi=0$. Therefore, $0$ is the
  unique accumulation point of the set $\xi_n\in \mathbb{D}$ and hence
  by compactness we get $\xi_n \to 0$. This completes the proof of the
  lemma.
\end{proof}

Corollary \ref{cont} will be used in the proof of Lemma
\ref{flipfloplemma}, see section \ref{section:flip}.
\subsection{Variational characterization of $\mu_2$}
\label{section:var} It follows from the Riemann mapping theorem and
Lemma~\ref{renormalization} that for any simply-connected domain
$\Omega$ there exists a conformal equivalence
$\phi:\mathbb{D}\rightarrow\Omega$, such that the pullback measure
$$d\mu(z)=\phi^*(dz)=|\phi'(z)|^2\,dz$$
satisfies for any $s\in
S^1$
\begin{gather}\label{conventionI}
  \int_\mathbb{D}X_s(z)\, d\mu(z)=0.
\end{gather}
Using a rotation if necessary, we may also assume that
\begin{gather}\label{conventionII}
    \int_{\mathbb{D}}X_{e_1}^2(z)\,d\mu(z)\geq\int_{\mathbb{D}}X_{s}^2(z)\,d\mu(z).
\end{gather}
for any $s\in S^1$. The proof of Theorem~\ref{maintheorem} is based
on the following variational characterization of $\mu_2(\Omega)$:
\begin{gather}\label{varicharact}
  \mu_2(\Omega)=\inf_{E}
  \sup_{0\neq u\in E}\frac{\int_{\mathbb{D}}|\nabla
    u|^2\,dz}{\int_{\mathbb{D}}u^2\,d\mu}
\end{gather}
where $E$ varies among all two-dimensional subspaces of
%why C^\infty? why closure?
the Sobolev space $H^1({\mathbb{D}})$ that are orthogonal to
constants, that is for each $f\in E$,
$\int_{\mathbb{D}}f\,d\mu=0$. Note that the Dirichlet energy is
conformally invariant
in two dimensions, and hence the numerator in \eqref{varicharact} can
be written using the standard Euclidean gradient and the Lebesgue
measure.

\subsection{Folding of hyperbolic caps}
It is well-known that the group of automorphisms of the disk
coincides with the isometry group of the Poincar\'e disk model of
the hyperbolic plane \cite[section 7.4]{Be}. Therefore, for
any $\xi\in\mathbb{D}$, the automorphism
$$d_\xi(z)=\frac{z+\xi}{\overline{\xi}z+1}$$
is an isometry. Note that we have  $d_0=\mbox{id}$ and
$d_\xi(0)=\xi$ for any $\xi$.

Let $\gamma$ be a geodesic in the Poincar\'e disk model, that is a
diameter or the intersection of the disk with a circle which is
orthogonal to $\partial \mathbb{D}$. Each connected
component of $\mathbb{D}\setminus\gamma$ is called a
\emph{hyperbolic cap} on $\mathbb{D}$. The space of hyperbolic caps
is parametrized as follows. Given $(r,p)\in(-1,1)\times S^1$ let
$$a_{r,p}=d_{rp}(a_{0,p}),$$ where
$$a_{0,p}=\{x\in\mathbb{D}\ :\ x\cdot p>0 \}$$
 is the half-disk such that $p$ is the center of its boundary half-circle. The
limit $r\rightarrow 1$ corresponds to a cap degenerating to a point
on the boundary $\partial\mathbb{D}$ (that is, $a\rightarrow p$),
while the limit $r\rightarrow -1$ corresponds to degeneration to the
full disk $\mathbb{D}$ (that is, $a\rightarrow\mathbb{D}$).
\begin{figure}[h]
  \centering
  \psfrag{r}[][][1]{$r$}
  \psfrag{p}[][][1]{$p$}
  \psfrag{a}[][][1]{$a_{0,p}$}
  \psfrag{b}[][][1]{$a_{r,p}$}
  \psfrag{d}[][][1]{$\stackrel{d_{rp}}{\longrightarrow}$}
  \includegraphics[width=10cm]{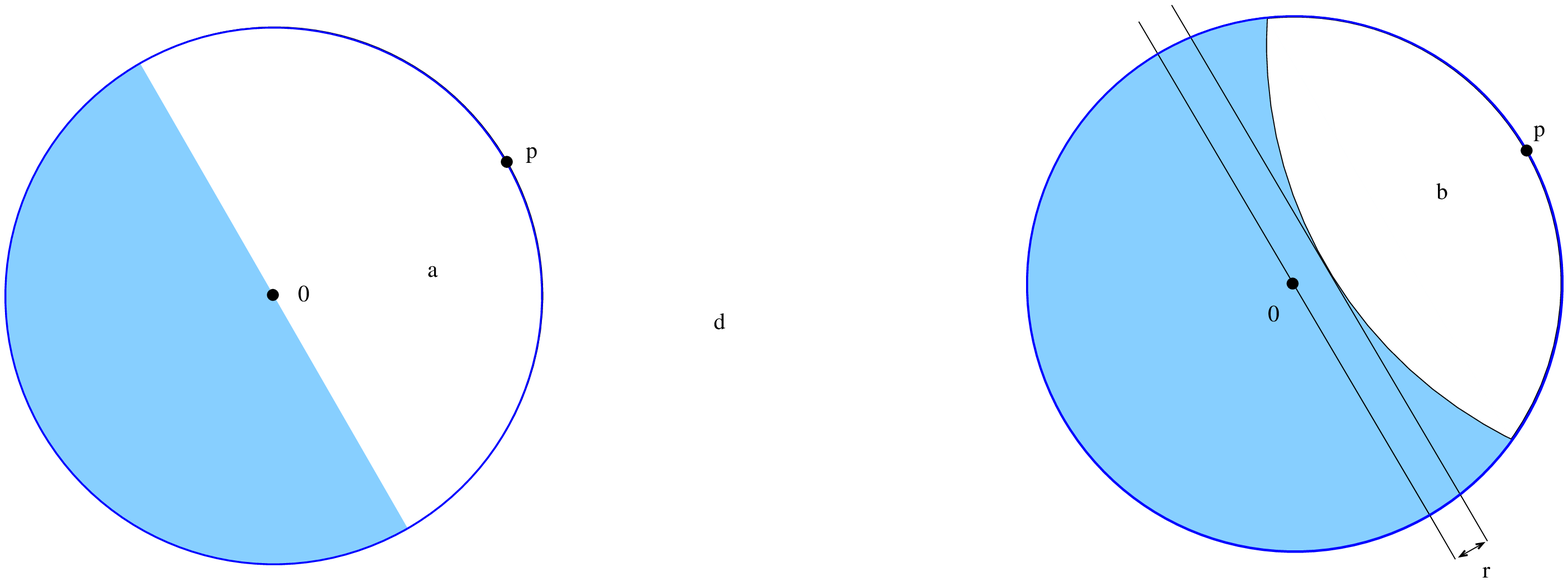}
  %\caption{Moduli space of conformal structures on $T^2$}
  %\label{figmodtorus}
\end{figure}
Given $p\in \mathbb{D}$, we define the automorphism  $R_p(z)=-p^2
\bar z$. It is the reflection with respect to the line going through
$0$ and orthogonal to the segment joining $0$ and $p$. For each cap
$a_{r,p}$, let us define a conformal automorphism
\begin{gather*}
  \tau_{a}=d_{rp}\circ R_p\circ d_{-rp}.
\end{gather*}
One can check that this is the reflection with respect to the
hyperbolic geodesic $\partial a_{r,p}$. In particular,
$\tau_a(a)=\mathbb{D}\setminus\overline{a}$ and $\tau_a$ is the
identity on $\partial a$.
\subsection{Folding and rearrangement of measure}
\label{section:folding}
%\begin{definition}
  Given a measure $d\mu$ on $\mathbb{D}$ and a hyperbolic cap $a\subset\mathbb{D}$, the \emph{folded
    measure} $d\mu_a$ is defined by
  $$d\mu_a=
  \begin{cases}
    d\mu+\tau_a^*d\mu & \mbox{ on } a,\\
    0 & \mbox{ on } \mathbb{D}\setminus\overline{a}.
  \end{cases}$$
%\end{definition}
Clearly, the measure $d\mu_a$ depends continuously in the norm
\eqref{norm} on the cap $a\subset\mathbb{D}$.
For each cap $a \in \mathbb{D}$ let us construct the following
conformal equivalence $\psi_a: \mathbb{D} \to a$.
\begin{figure}[h]
  \centering
  \psfrag{f}[][][1]{$\stackrel{T_a}{\longrightarrow}$}
  \psfrag{g}[][][1]{$\stackrel{\phi_b}{\longrightarrow}$}
  \psfrag{h}[][][1]{$\stackrel{T'_a}{\longrightarrow}$}
  \psfrag{w}[][][1]{$\psi_a$}
  \psfrag{a}[][][1]{$a$}
  \psfrag{b}[][][1]{$b$}
  \psfrag{D}[][][1]{$\mathbb{D}$}
  \includegraphics[width=10cm]{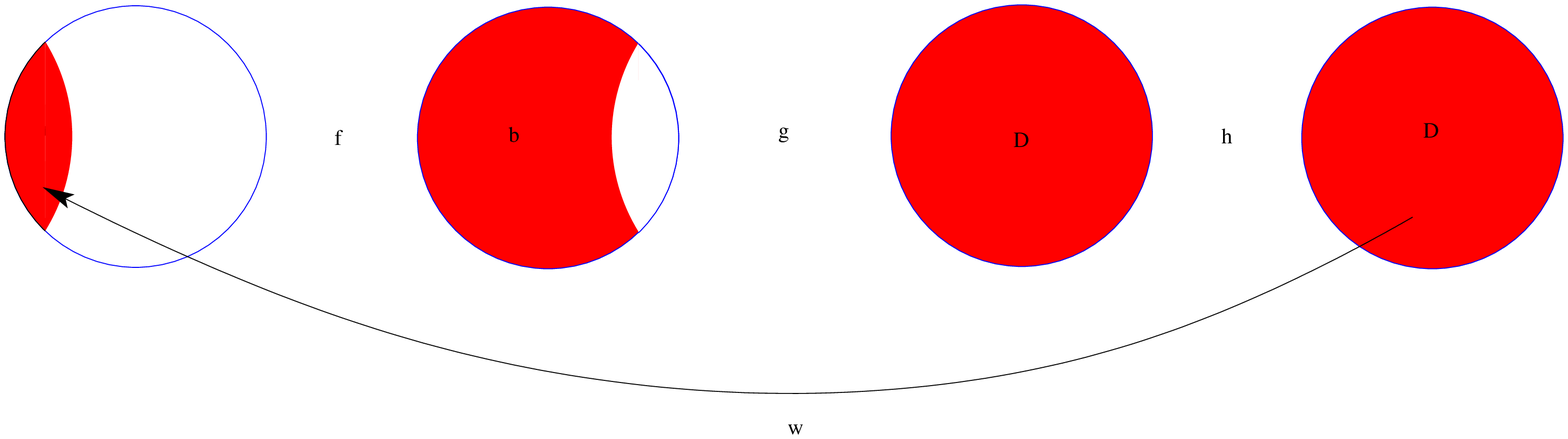}
  %\caption{Moduli space of conformal structures on $T^2$}
  %\label{figmodtorus}
\end{figure}
First, observe that it follows from the proof of the Riemann mapping
theorem~\cite[p.342]{Taylor} that there exists  a family
$\phi_a:a\rightarrow\mathbb{D}$ of conformal equivalences depending
continuously on the cap $a$ such that
$\displaystyle\lim_{a\rightarrow\mathbb{D}}\phi_a=\mbox{id}$ pointwise.
Let $\xi(a)=\Gamma(d\mu_a)$ be the normalizing point for the measure
$d\mu_a$ and set $T_a=d_{\xi(a)}$. The measure $(T_a)_*d\mu_a$ is
supported in the cap $b=T_a(a)$.
Pushing this measure to the full
disk using $\phi_b:b \rightarrow\mathbb{D}$ leads to the measure
$$(\phi_b\circ T_a)_*d\mu_a.$$
Let $\eta(a)=\Gamma((\phi_b \circ T)_*d\mu_a)$ and set
$$T'_a:=d_{\eta(a)}:\mathbb{D}\rightarrow\mathbb{D}$$
The conformal equivalence $\psi_a:\mathbb{D}\rightarrow a$ is
defined by
$$\psi_a=\left(T'_a\circ\phi_b\circ T_a\right)^{-1}.$$

The pull-back by $\psi_a$ of the folded measure is
\begin{equation}
\label{rearranged}
 d\nu_a=\psi_a^*d\mu_a
\end{equation}
It is clear from the above construction that
$d\nu_a$ is a normalized measure on the whole disk. We call $d\nu_a$
the {\it rearranged measure}.
It also follows from the construction that the conformal
transformations $\psi_a: \mathbb{D}\rightarrow a$ depend continuously
on $a$ and
\begin{equation}
\label{fulldisk}
  \lim_{a\rightarrow\mathbb{D}}\psi_a =
  \mbox{id}:\mathbb{D}\rightarrow\mathbb{D}
  \end{equation}
in the sense of the pointwise convergence. We will make use of the
following important property of the rearranged measure.
\begin{lemma}\label{flipfloplemma} If a sequence of hyperbolic caps
$a \in \mathbb{D}$  degenerates to a point $p \in \partial
\mathbb{D}$, the limiting rearranged measure is a ``flip-flop'' of
the original measure $d\mu$:
  \begin{gather}
    \lim_{a\rightarrow p}d\nu_a=R_p^*d\mu. \tag{F}\label{fliflop}
  \end{gather}
\end{lemma}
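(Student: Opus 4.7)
The plan is to decompose $d\nu_a = \psi_a^* d\mu_a$ into two pieces corresponding to the two terms in the definition of $d\mu_a$, show that one piece vanishes in the limit, and use the rigidity supplied by Lemma \ref{uniqueness} to identify the limit of the other.

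First I would use the fact that $\tau_a$ is an involution swapping $a$ and $\mathbb{D}\setminus\overline{a}$ to rewrite
\[
 d\nu_a \;=\; \psi_a^*(d\mu|_a) \;+\; (\tau_a\circ\psi_a)^*(d\mu|_{\mathbb{D}\setminus\overline{a}}).
\]
Since $d\mu$ is absolutely continuous on $\overline{\mathbb{D}}$ and the cap $a$ shrinks to the boundary point $p$, the first summand has total mass $\mu(a)\to 0$ and tends to the zero measure. The whole content of the lemma is thus to show that the anti-holomorphic maps $\Phi_a:=\tau_a\circ\psi_a\colon \mathbb{D}\to\mathbb{D}\setminus\overline{a}$ satisfy $\Phi_a^* d\mu \to R_p^* d\mu$.

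Next I would pass to a subsequential limit via a normal family argument. The family $\{\Phi_a\}$ is uniformly bounded by $1$, so (after complex conjugation to reduce to the holomorphic case) Montel's theorem produces a subsequence $\Phi_{a_n}$ converging locally uniformly, together with its derivatives, to an anti-holomorphic map $\Phi_\infty\colon\mathbb{D}\to\overline{\mathbb{D}}$. In particular $\Phi_{a_n}^* d\mu \to \Phi_\infty^* d\mu$ weakly. Because $\Phi_a^* d\mu$ has total mass $\mu(\mathbb{D}\setminus\overline{a})\to\mu(\mathbb{D})>0$, the limit $\Phi_\infty$ cannot be constant and cannot have its image in $\partial\mathbb{D}$; by the open mapping theorem it is then an anti-conformal automorphism of $\mathbb{D}$.

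By Corollary \ref{cont}, normalization of each $d\nu_a$ passes to the limit measure $\Phi_\infty^* d\mu$, giving $\int X_s\circ\Phi_\infty^{-1}\,d\mu=0$ for every $s$. Setting $G:=\Phi_\infty^{-1}$ and decomposing it as complex conjugation followed by a holomorphic automorphism $e^{i\alpha}d_\xi$ of $\mathbb{D}$, a short computation using the complex combination $\mathbf{X}:=X_{e_1}+iX_{e_2}=f(|z|)z/|z|$ reduces the normalization of $G_* d\mu$ to the condition $\int \mathbf{X}\circ d_\xi\,d\mu = 0$. Since $d\mu$ itself is normalized, the uniqueness statement of Lemma \ref{uniqueness} forces $\xi=0$, whence $G(z)=e^{-i\alpha}\bar z = R_q(z)$ for some $q\in\partial\mathbb{D}$ with $-q^2=e^{-i\alpha}$. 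Thus $\Phi_\infty=R_q$.

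The main obstacle is then to pin down $q=p$. For this I would return to the explicit construction $\psi_a^{-1}=T_a'\circ\phi_b\circ T_a$. As $a=a_{r,p}\to p$ (so $r\to 1$), the folded measure $d\mu_a$ concentrates near $p$, which controls the limits of the renormalizing points $\xi(a)=\Gamma(d\mu_a)$ and $\eta(a)$ on the boundary. Using $\tau_a = d_{rp}\circ R_p\circ d_{-rp}$ and the fact that $\phi_b\to\mathrm{id}$ since $b=T_a(a)$ approaches $\mathbb{D}$ in the limit, a direct computation of $\Phi_a=\tau_a\circ T_a^{-1}\circ\phi_b^{-1}\circ(T_a')^{-1}$ identifies the limit precisely as $R_p$ rather than $R_q$ for some other $q$. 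Since every subsequential limit equals $R_p$, the full sequence converges, establishing \eqref{fliflop}.
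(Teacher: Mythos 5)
Your strategy (extract a subsequential limit of the anti-holomorphic maps $\Phi_a=\tau_a\circ\psi_a$ by Montel, then use renormalization plus Lemma \ref{uniqueness} to force the limit to be a reflection $R_q$) is a genuinely different and appealing idea, but as written it has two real gaps. First, the non-degeneracy step fails: from locally uniform convergence you may only conclude $\Phi_{a_n}^*d\mu\to\Phi_\infty^*d\mu$ against test functions compactly supported in $\mathbb{D}$, and mass can escape to $\partial\mathbb{D}$ in the limit; the fact that each $\Phi_a^*d\mu$ has total mass $\mu(\mathbb{D}\setminus\overline{a})\to\mu(\mathbb{D})$ therefore does not rule out that $\Phi_\infty$ is a boundary constant. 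Moreover, even granting $\Phi_\infty$ is non-constant, the open mapping theorem does not make it an anti-conformal \emph{automorphism}: the maps $\Phi_a$ are embeddings onto the proper subdomains $\mathbb{D}\setminus\overline{a}$, not automorphisms, so injectivity (Hurwitz) does not come with surjectivity; you would need a Carath\'eodory-kernel or equivalent argument, which itself requires exactly the a priori non-degeneracy ($\Phi_{a_n}(0)$ staying in a compact set, $|\Phi_{a_n}'(0)|$ bounded below) that is missing. Without the automorphism property, the reduction of the renormalization condition to $\int_{\mathbb{D}}X_s\circ d_\xi\,d\mu=0$ and hence to Lemma \ref{uniqueness} does not go through.

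Second, the step you label ``pin down $q=p$'' is not a detail but the technical heart of the lemma, and your sketch silently assumes its key ingredients: that $\xi(a)$ and $\eta(a)$ have the stated limiting behavior, that $b=T_a(a)\to\mathbb{D}$, and that $\phi_b\to\mathrm{id}$, $T_a'\to\mathrm{id}$. In the paper these facts are \emph{derived}, via the replacement of $d\mu_a$ by $d\hat{\mu}_a=(\tau_a)_*d\mu$ (which is close in the norm \eqref{norm}), the exact identity $d_{\zeta_a}\circ\tau_a=R_p$ with $\zeta_a=-\tfrac{2r}{r^2+1}\,p$, the continuity of $\Gamma$ (Corollary \ref{cont}) giving $\eta_a\to0$, and only then the conclusion $T_a(a)\to\mathbb{D}$. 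Note also that this direct argument yields convergence in the norm \eqref{norm} and makes the whole normal-families apparatus unnecessary: once you carry out the ``direct computation'' you appeal to, you have reproved the lemma the paper's way, and the compactness part of your argument (which is where the gaps sit) does no work. To repair your route you would need an independent proof that the family $\{\Phi_a\}$ does not degenerate and that its limits are surjective, plus an honest computation identifying the reflection point, and the latter is essentially the paper's proof.
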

We call~(\ref{fliflop})  the flip-flop property. The  proof of
Lemma~\ref{flipfloplemma} will be presented at the end of the paper.

\subsection{Maximizing directions}
Given a finite measure $d\nu$ on $\mathbb{D}$, consider
the function $V:\mathbb{R}^2\rightarrow\mathbb{R}$ defined by
$$V(s)=\int_{\mathbb{D}}X_s^2\, d\nu.$$
This function is a quadratic form since the
mapping $\mathbb{R}^2 \times \mathbb{R}^2 \to \mathbb{R}$ defined by
$$(s,t) \mapsto\int_{\mathbb{D}}X_s X_t\, d\nu$$
is symmetric and bilinear (the latter easily follows from
\eqref{Xs}). In particular, $V(s)=V(-s)$ for any $s$.

Let $\mathbb{R}P^1=S^1/\mathbb{Z}_2$ be the projective line. We
denote by $[s]\in \mathbb{R}P^1$ the element of the projective line
corresponding to the pair of points $\pm s \in S^1$.
We say that $[s] \in \mathbb{R}P^1$ is a {\it maximizing
direction} for the measure $d\nu$ if $V(s)\ge V(t)$ for any
$t \in S^1$.
The measure $d\nu$ is called {\it simple} if there is
a unique maximizing direction. Otherwise, the measure $d\nu$ is said
to be {\it multiple}.
We have the following
\begin{lemma}
\label{multcaplemma}
  A measure $d\nu$ is multiple if and only if $V(s)$ does not depend on
  $s \in S^1$.
\end{lemma}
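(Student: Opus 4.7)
The plan is to reduce the statement to the spectral theorem for a positive semi-definite symmetric $2\times 2$ matrix. The first step is to make explicit the quadratic form structure of $V$ that was already pointed out just before the lemma. From the definition \eqref{Xs} one has the linearity $X_s = s_1 X_{e_1} + s_2 X_{e_2}$ for any $s=(s_1,s_2)\in\mathbb{R}^2$, and substituting into the definition of $V$ yields
\[
V(s) = s^T Q s, \qquad Q_{ij} = \int_{\mathbb{D}} X_{e_i} X_{e_j}\, d\nu,
\]
where $Q$ is a symmetric positive semi-definite $2\times 2$ matrix.

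Next, I would diagonalize $Q$ in an orthonormal basis $(v_1, v_2)$ of $\mathbb{R}^2$, with eigenvalues $\lambda_1 \geq \lambda_2 \geq 0$. Expressing $s \in S^1$ in this basis as $s = u_1 v_1 + u_2 v_2$ with $u_1^2 + u_2^2 = 1$, the restriction of $V$ to $S^1$ takes the very explicit form
\[
V(s) = \lambda_1 u_1^2 + \lambda_2 u_2^2 = \lambda_2 + (\lambda_1 - \lambda_2)\, u_1^2.
\]

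Both implications then fall out immediately. For the ``if'' direction, a constant $V$ on $S^1$ makes every $[s] \in \mathbb{R}P^1$ a maximizing direction, so $d\nu$ is multiple. For the ``only if'' direction I would argue by contrapositive: if $\lambda_1 > \lambda_2$, the displayed formula shows that the maximum of $V|_{S^1}$ is uniquely attained at $u_1 = \pm 1$, giving the single maximizing direction $[v_1]\in\mathbb{R}P^1$, so $d\nu$ is simple. Hence multiplicity of $d\nu$ forces $\lambda_1 = \lambda_2$, which means that $Q$ is a scalar multiple of the identity, and therefore $V(s) = \lambda_1|s|^2 \equiv \lambda_1$ on $S^1$.

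There is really no obstacle to overcome here: the entire content of the lemma is the elementary spectral theorem in dimension two. The only substantive observation is the identification of $V$ as a quadratic form via the linearity $s\mapsto X_s$, but this was essentially already recorded in the bilinearity remark preceding the statement; everything else is bookkeeping.
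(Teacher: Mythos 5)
Your proof is correct and follows essentially the same route as the paper: identify $V$ as a quadratic form, diagonalize it in an orthonormal basis, and observe that uniqueness of the maximizing direction on $S^1$ fails exactly when the two eigenvalues coincide, i.e.\ when $V$ is constant on $S^1$. Your version merely spells out the two implications a bit more explicitly than the paper does.
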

\begin{proof}
  Since $V(s)$ is a symmetric quadratic form, it can be
  diagonalized. This means that there exists an orthonormal basis
  $(v_1,v_2)$ of $\mathbb{R}^2$,
  such that for any $s=\alpha v_1+\beta v_2\in \mathbb{D}$ we have
  $V(s)=M\alpha^2+m\beta^2.$ for some numbers $0< m\leq M.$
  It is clear now that the measure $d\nu$  is multiple if and only if $M=m$,
  and therefore $V(s)$ takes the same value for all $s\in S^1$.
\end{proof}

Note that by \eqref{conventionII}, $[e_1]$ is a maximizing
direction for the measure $d\mu$.
\begin{proposition}
  \label{multexist} If the measure $d\mu$ is simple, then there exists
  cap $a\subset\mathbb{D}$ such that the rearranged measure $d\nu_a$ is
  multiple.
\end{proposition}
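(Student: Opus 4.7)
My plan is to assume for contradiction that $d\nu_a$ is simple for every cap $a$ and derive a topological obstruction. First, I compactify the space of caps: using the parametrization $(r,p) \in (-1,1)\times S^1$, the limit $r\to 1$ adds a boundary circle (each $p$ gives a distinct degenerate cap $a\to p$), while $r\to -1$ adds a single point (all such limits coalesce to $a=\mathbb{D}$). The resulting compactification is homeomorphic to a closed disk $\overline{D^2}$, with origin $\leftrightarrow a=\mathbb{D}$ and $\partial D^2\leftrightarrow$ degenerate caps $a=p\in\partial\mathbb{D}$. Under the contrary assumption, the map $F\colon\overline{D^2}\to\mathbb{R}P^1$ sending each cap to the unique maximizing direction of $d\nu_a$ is well-defined.

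Next I would verify continuity of $F$. On the interior, $d\nu_a$ depends continuously on $a$ in the norm \eqref{norm} because $d\mu_a$ does and the normalizing points $\xi(a),\eta(a)$ used to build $\psi_a$ depend continuously by Corollary \ref{cont}; the top eigendirection of a simple symmetric quadratic form $V(s)$ then varies continuously with the form. At the boundary $a\to p$, Lemma \ref{flipfloplemma} gives $d\nu_a\to R_p^*d\mu$. Since $R_p$ is an orthogonal involution of $\mathbb{R}^2$, a direct check shows $X_s\circ R_p=X_{R_p s}$, so $V_{R_p^* d\mu}(s)=V_{d\mu}(R_p s)$, and the limit is simple with maximizing direction $[R_p e_1]$. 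At the origin $a\to\mathbb{D}$, the complement $\mathbb{D}\setminus\overline{a}$ shrinks to a single boundary point and $d\mu$ has no mass on $\partial\mathbb{D}$, so $\tau_a^*d\mu$ has vanishing norm; combining with $\psi_a\to\mathrm{id}$ from \eqref{fulldisk} yields $d\nu_a\to d\mu$, with maximizing direction $[e_1]$.

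Then I would compute the homotopy class of $F|_{\partial D^2}\colon S^1\to\mathbb{R}P^1$. Writing $p=e^{i\phi}$, we have $R_p e_1=-p^2=-e^{2i\phi}$. Under the identification $\mathbb{R}P^1\cong S^1$ via the double cover $[e^{i\theta}]\mapsto e^{2i\theta}$, the boundary map becomes $e^{i\phi}\mapsto e^{4i\phi}$, which induces multiplication by $4$ on $\pi_1$. In particular $F|_{\partial D^2}$ is not null-homotopic in $\mathbb{R}P^1$.

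This contradicts the fact that any continuous map from the contractible space $\overline{D^2}$ to $\mathbb{R}P^1$ must restrict on $\partial D^2$ to a null-homotopic map, and so some cap $a\subset\mathbb{D}$ must yield a multiple $d\nu_a$. The main obstacle I anticipate is the continuity check at the origin: Lemma \ref{flipfloplemma} takes care of the boundary circle by construction, but showing $d\nu_a\to d\mu$ as $a\to\mathbb{D}$ requires combining \eqref{fulldisk} with the norm-vanishing of $\tau_a^*d\mu$ (exploiting that $d\mu$ is absolutely continuous and that $\mathbb{D}\setminus\overline{a}$ shrinks to a single boundary point).
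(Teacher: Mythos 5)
Your proposal is correct and follows essentially the same route as the paper: assume every rearranged measure is simple, use the flip-flop lemma and \eqref{fulldisk}, \eqref{conventionII} to identify the limiting maximizing directions $[e^{2i\phi}]$ and $[e_1]$, and derive a contradiction from the resulting homotopy in $\mathbb{R}P^1$. The only (cosmetic) differences are that you collapse the $r=-1$ end to compactify the cap space into a closed disk and compute the explicit degree $4$ of the boundary loop, whereas the paper keeps the cylinder $[-1,1]\times S^1$ and simply observes that a trivial loop cannot be homotopic to a non-contractible one.
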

The proof of this proposition is based on a topological argument,
somewhat more subtle than the one used in the proof of Lemma
\ref{renormalization}. This is a proof by contradiction. We assume
the measure $d\mu$ as well as the measures $d\nu_a$ to be simple.
Given a cap $a\subset\mathbb{D}$,  let $[s(a)]\in\mathbb{R}P^1$ be
the unique maximizing direction for $d\nu_a$. Since $d\nu_a$ depends
continuously on $a$ and $X_s$ depends continuously on $s$, it
follows that the map $a\mapsto [s(a)]$ is continuous. Let us
understand the behavior of the maximizing directions as the cap $a$
degenerates to the full disk and to a point.
\begin{lemma}\label{degeneratecaps}
  Assume the measures $d\mu$ as well as each $d\nu_a$ to be simple.
  Then
  \begin{gather}
    \lim_{a\rightarrow\mathbb{D}} [s(a)]=[e_1]\label{eqndegenerate1}\\
    \lim_{a\rightarrow e^{i\theta}} [s(a)]=
    [e^{2i\theta}]\label{eqndegenerate2}.
  \end{gather}
\end{lemma}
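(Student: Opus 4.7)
My plan is to exploit the fact that, for a simple measure $d\nu$, the unique (projective) maximizer of its quadratic form $V_\nu(s):=\int_\mathbb{D} X_s^2\, d\nu$ depends continuously on $d\nu$ in the norm \eqref{norm}. Under the simplicity hypothesis on each $d\nu_a$, the limit of $[s(a)]$ can then be read off from the limit of the measure $d\nu_a$ together with the maximizer of the corresponding limiting form.

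\emph{For the limit $a\to\mathbb{D}$}, I would first show $d\nu_a\to d\mu$ as measures. Since $d\mu=|\phi'|^2\,dz$ is absolutely continuous and the complement $\mathbb{D}\setminus\bar a$ shrinks, $\mu(\mathbb{D}\setminus\bar a)\to 0$. Restricted to the cap, $d\mu_a=d\mu|_a+\tau_a^*(d\mu|_{\mathbb{D}\setminus\bar a})$, whose first summand tends to $d\mu$ in total variation while the second is a positive measure of total mass $\mu(\mathbb{D}\setminus\bar a)\to 0$. Combined with the pointwise convergence $\psi_a\to\mbox{id}$ from \eqref{fulldisk}, I expect this to give $V_{\nu_a}(s)\to V_\mu(s)$ uniformly on $S^1$. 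The normalization \eqref{conventionII} and simplicity of $d\mu$ then identify the unique maximizer of $V_\mu$ as $[e_1]$, producing \eqref{eqndegenerate1}.

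\emph{For the limit $a\to p:=e^{i\theta}$}, I would invoke the flip-flop Lemma \ref{flipfloplemma} to obtain $d\nu_a\to R_p^*d\mu$. Viewed as an element of $O(2)$, the reflection $R_p$ satisfies $|R_p(z)|=|z|$ and $R_p(z)\cdot s=z\cdot R_p(s)$; substituting into \eqref{Xs} gives the equivariance $X_s\circ R_p=X_{R_p(s)}$, whence
\begin{equation*}
  V_{R_p^*\mu}(s)=\int_\mathbb{D}(X_s\circ R_p)^2\,d\mu=V_\mu(R_p(s)).
\end{equation*}
Its unique maximizer is therefore $[R_p^{-1}(e_1)]=[R_p(e_1)]=[-p^2]=[e^{2i\theta}]$, giving \eqref{eqndegenerate2}.

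The main obstacle here is really the flip-flop Lemma \ref{flipfloplemma} itself, whose proof the authors defer. Once it is in hand, what remains is a routine continuity argument for integrals against $X_s^2$, together with the elementary $O(2)$-equivariance of the basis $\{X_s\}$ under reflections and rotations of $\mathbb{R}^2$.
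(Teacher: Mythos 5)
Your proposal is correct and follows essentially the same route as the paper: the limit \eqref{eqndegenerate1} is read off from $\psi_a\to\mbox{id}$ (i.e.\ \eqref{fulldisk}) together with the normalization \eqref{conventionII}, and \eqref{eqndegenerate2} is deduced from the flip-flop Lemma \ref{flipfloplemma} via the equivariance $X_s\circ R_p=X_{R_p s}$, which identifies the limiting maximizing direction as $[R_p e_1]=[e^{2i\theta}]$. The extra detail you supply on the convergence $d\nu_a\to d\mu$ and on continuity of the unique maximizer only fills in steps the paper leaves implicit.
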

\begin{proof}
  First, note that formula~\eqref{eqndegenerate1} immediately follows from
  \eqref{fulldisk} and \eqref{conventionII}.
  Let us prove \eqref{eqndegenerate2}. Set $p=e^{i\theta}$.
  Lemma~\ref{flipfloplemma} implies
  \begin{equation}
    \label{Xps}
    \lim_{a\rightarrow p}\int_{\mathbb{D}}X_s^2\,d\nu_a=
    \int_{\mathbb{D}}X_s^2\,R_p^*d\mu
    =\int_{\mathbb{D}}X_s^2\circ R_p\,d\mu
    =\int_{\mathbb{D}}X_{R_ps}^2\,d\mu.
  \end{equation}
  Since $[e_1]$ is the unique maximizing direction for $\mathbb{D}$,
  the right hand side of \eqref{Xps} is maximal for
  $R_ps=\pm e_1$. Applying $R_p$ on both sides we get
  $s=\pm e^{2i\theta}$ and hence  $[s]=[e^{2i\theta}]$.
\end{proof}
\begin{proof}[Proof of Proposition~\ref{multexist}]
  Suppose that for each cap $a\subset\mathbb{D}$ the measure $d\nu_a$
  is simple.
  Recall that the space of caps is identified with
  $(-1,1)\times S^1$.
  Define
  $h:(-1,1)\times S^1\rightarrow \mathbb{R}P^1$ by
  by $h(r,p)=[s(a_{r,p})].$
  It follows from Lemma~\ref{degeneratecaps}) that  $h$ extends to a continuous
  map on $[-1,1]\times S^1$ such that
  $$h(-1,e^{i\theta})=[e_1], h(1,e^{i\theta})=[e^{2i\theta}].$$
  This means that $h$ is a homotopy between a trivial loop and a
  non-contractible loop on $\mathbb{R}P^1$. This is a contradiction.
\end{proof}

\subsection{Test functions}
\label{subsection:testfunctions}
Assume that $d\mu$ is simple. By
Proposition~\ref{multexist} and Lemma~\ref{multcaplemma} there
exists a cap $a\subset\mathbb{D}$ such that
$$\int_{\mathbb{D}}X_s^2\,d\nu_a(z)$$
does not depend on the choice of $s\in S^1$.
Let $a^*=\mathbb{D}\setminus\overline{a}$.
\begin{definition}\label{defLift}
  Given a function $u:a\rightarrow\mathbb{R}$, the \emph{lift}
     of $u$, $\tilde{u}:\mathbb{D}\rightarrow\mathbb{R}$ is given by
  $$\tilde{u}(z)=
  \begin{cases}
    u(z) & \mbox{if } z\in a,\\
    u(\tau_az) & \mbox{if } z\in a^*.
  \end{cases}$$
\end{definition}
Given $u:a\rightarrow\mathbb{R}$ we have
\begin{gather*}
  \int_{a}u\,d\mu_a=\int_{a}u\,d\mu+\int_{a^*}u\circ\tau_a\,d\mu=\int_{\mathbb{D}}\tilde{u}\,d\mu,\\
%  \int_{a}u^2\,d\mu_a=\int_{a}u^2\,d\mu+\int_{a^*}u^2\circ\tau_a\,d\mu=\int_{\mathbb{D}}\tilde{u}^2\,d\mu.
\end{gather*}

For every $s\in\mathbb{R}^2$, set
$$u_a^s= X_{s} \circ \psi_a^{-1}:a\rightarrow\mathbb{R}.$$
We will use the two-dimensional space
$$E=
\left\{\tilde{u}_a^s \,\bigl|\bigr.
  s\in\mathbb{R}^2\right\}$$
of test functions in the variational
characterization~(\ref{varicharact}) of $\mu_2.$

\begin{proposition}\label{mainestimate}
  For each $s\in\mathbb{R}^2$
  \begin{equation}
  \label{fff}
  \frac{\int_{\mathbb{D}}|\nabla \tilde{u}_a^s|^2\,dz}{\int_{\mathbb{D}}(\tilde{u}_a^s)^2\,d\mu}
  \leq 2\mu_1(\mathbb{D}).
  \end{equation}
\end{proposition}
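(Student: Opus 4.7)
The plan is to compute the numerator exactly by conformal invariance of the 2D Dirichlet energy, simplify the denominator by change of variables, and then exploit the multiplicity of $d\nu_a$ to reduce the claim to a single scalar inequality that I expect to be handled by the subharmonic arguments promised in the paper's plan. Throughout, I assume $\area(\Omega)=\pi$; the bound in the proposition is homogeneous under rescaling, so this is merely a normalization.

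For the numerator, I split along the definition of the lift: $\int_\mathbb{D}|\nabla\tilde{u}_a^s|^2\,dz = \int_a|\nabla u_a^s|^2\,dz + \int_{a^*}|\nabla(u_a^s\circ\tau_a)|^2\,dz$. Since $\tau_a$ is a conformal involution sending $a^*$ onto $a$ and the Dirichlet energy is conformally invariant in two dimensions, the second term equals the first, giving $2\int_a|\nabla u_a^s|^2\,dz$. Applying conformal invariance again through $\psi_a:\mathbb{D}\to a$ and using the eigenfunction identity $\int_\mathbb{D}|\nabla X_s|^2\,dz = \mu_1(\mathbb{D})\int_\mathbb{D}X_s^2\,dz$, I obtain numerator $= 2\mu_1(\mathbb{D})\int_\mathbb{D}X_s^2\,dz$.

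For the denominator, I use the lift identity $\int_\mathbb{D}\tilde{u}\,d\mu = \int_a u\,d\mu_a$ (displayed just above the proposition in the excerpt) and then change variables via $\psi_a$ to obtain $\int_\mathbb{D}(\tilde{u}_a^s)^2\,d\mu = \int_a(u_a^s)^2\,d\mu_a = \int_\mathbb{D}X_s^2\,d\nu_a$. The cap $a$ is chosen via Proposition~\ref{multexist} so that $d\nu_a$ is multiple, so Lemma~\ref{multcaplemma} gives that $V(s)=\int_\mathbb{D}X_s^2\,d\nu_a$ is constant on $S^1$. Trace-computing with the identity $X_{e_1}^2+X_{e_2}^2 = f(|z|)^2$ yields $V(s)=\tfrac12\int_\mathbb{D}f(|z|)^2\,d\nu_a$ on $S^1$, and $\int_\mathbb{D}X_s^2\,dz = \tfrac12\int_\mathbb{D}f(|z|)^2\,dz$ by the same trace. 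Since $X_s=|s|\,X_{s/|s|}$, the Rayleigh ratio is $|s|$-homogeneous, so it suffices to treat $s\in S^1$.

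Assembling, the ratio becomes $2\mu_1(\mathbb{D})\,\int_\mathbb{D}f(|z|)^2\,dz\,/\,\int_\mathbb{D}f(|z|)^2\,d\nu_a$, and the proposition reduces to the single inequality $\int_\mathbb{D}f(|z|)^2\,d\nu_a \geq \int_\mathbb{D}f(|z|)^2\,dz$. This is the main obstacle and is where the subharmonic facts promised in section~2.7 of the paper's plan should come in: the density of $d\nu_a$ is a sum of conformal factors of the form $|h'|^2$ with $\log|h'|$ harmonic, hence log-subharmonic, and combining this structure with the renormalization $\int_\mathbb{D}X_s\,d\nu_a=0$ should yield the desired weighted lower bound. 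Everything preceding this final step is essentially bookkeeping with conformal invariance and the multiplicity criterion.
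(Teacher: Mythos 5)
Your reduction is set up exactly as in the paper: the numerator computation (split via the lift, conformal invariance to double over the cap, then transfer to the disk and use the eigenfunction identity) reproduces Lemma~\ref{dirichletestimate}, and your change of variables $\int_{\mathbb{D}}(\tilde{u}_a^s)^2\,d\mu=\int_a(u_a^s)^2\,d\mu_a=\int_{\mathbb{D}}X_s^2\,d\nu_a$ together with the multiplicity of $d\nu_a$ matches the first half of the proof of Lemma~\ref{l2estimate}. Your final target, $\int_{\mathbb{D}}f(|z|)^2\,d\nu_a\ge\int_{\mathbb{D}}f(|z|)^2\,dz$ (with $\area(\Omega)=\pi$ normalized), is indeed equivalent to that lemma. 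But you stop precisely at the point where the actual work lies: the inequality is asserted to follow because the subharmonic structure plus the renormalization ``should yield'' it. That is a genuine gap, not bookkeeping --- this inequality \emph{is} the analytic content of the proposition, and it does not follow from log-subharmonicity of the conformal factors by itself; one must also use that $f$ is monotone increasing on $[0,1]$ and that $d\nu_a$ has total mass $\pi$.

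Concretely, the missing argument (the paper's Lemma~\ref{subharm} plus the second half of Lemma~\ref{l2estimate}) runs as follows: write $d\nu_a=\delta(z)\,dz$ with $\delta$ subharmonic (a sum of exponentials of harmonic functions coming from the two flat conformal factors), so the circular means $W(\rho)=\int_0^{2\pi}\delta(\rho e^{i\phi})\,d\phi$ are non-decreasing in $\rho$; with $G(r)=\int_0^r W(\rho)\rho\,d\rho$ and $G(1)=\pi$ this gives $G(r)\le\pi r^2$, and then integration by parts in $\int_0^1 f^2(r)G'(r)\,dr$, using that $(f^2)'\ge 0$, yields the bound $\ge 2\pi\int_0^1 f^2(r)r\,dr$, i.e.\ exactly your target inequality. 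Note also that the mechanism you propose is slightly misdirected: the renormalization $\int_{\mathbb{D}}X_s\,d\nu_a=0$ plays no role in this step --- it is needed only to make the lifted test functions orthogonal to constants, i.e.\ admissible in the variational characterization \eqref{varicharact}, which is a separate point your write-up also leaves implicit. What closes the estimate is the Szeg\H{o}-type rearrangement argument based on monotonicity of circular means of the subharmonic density and monotonicity of $f$, not the vanishing of the first moments.
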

We split the proof of Proposition~\ref{mainestimate}  in two parts.
\begin{lemma}\label{dirichletestimate}
  For any hyperbolic cap $a\subset\mathbb{D}$,
  $$\int_{\mathbb{D}}|\nabla \tilde{u}_a^s|^2\,dz=\left(2\pi\int_{r=0}^1f^2(r)r\,dr\right)\mu_1(\mathbb{D}).$$
\end{lemma}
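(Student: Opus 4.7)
The plan is to exploit the conformal invariance of the Dirichlet energy in two dimensions twice, once for the Riemann map $\psi_a:\mathbb{D}\to a$ and once for the reflection $\tau_a$, and then to reduce the resulting integral to an explicit eigenvalue computation for $X_s$ on the full disk.

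First I would observe that the lift $\tilde u_a^s$ belongs to $H^1(\mathbb{D})$: it is continuous across the hyperbolic geodesic $\partial a$ since $\tau_a$ restricts to the identity on $\partial a$, and each of the two pieces lies in $H^1$ on its side. This justifies splitting
\begin{equation*}
\int_{\mathbb{D}}|\nabla \tilde u_a^s|^2\,dz=\int_{a}|\nabla u_a^s|^2\,dz+\int_{a^*}|\nabla(u_a^s\circ\tau_a)|^2\,dz.
\end{equation*}

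Next, since $\tau_a=d_{rp}\circ R_p\circ d_{-rp}$ is an (orientation-reversing) conformal automorphism of $\mathbb{D}$ sending $a^*$ onto $a$, conformal invariance of the two-dimensional Dirichlet integral gives
\begin{equation*}
\int_{a^*}|\nabla(u_a^s\circ\tau_a)|^2\,dz=\int_{a}|\nabla u_a^s|^2\,dz.
\end{equation*}
Applying the same principle to the conformal equivalence $\psi_a:\mathbb{D}\to a$ and the definition $u_a^s=X_s\circ\psi_a^{-1}$ yields
\begin{equation*}
\int_{a}|\nabla u_a^s|^2\,dz=\int_{\mathbb{D}}|\nabla X_s|^2\,dz.
\end{equation*}
Combining these two identities produces the doubling factor:
\begin{equation*}
\int_{\mathbb{D}}|\nabla \tilde u_a^s|^2\,dz=2\int_{\mathbb{D}}|\nabla X_s|^2\,dz.
\end{equation*}

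Finally, since $X_s$ is a Neumann eigenfunction of $\mathbb{D}$ with eigenvalue $\mu_1(\mathbb{D})$, integration by parts (using the Neumann condition $\partial_\nu X_s=0$ on $\partial\mathbb{D}$) gives $\int_{\mathbb{D}}|\nabla X_s|^2\,dz=\mu_1(\mathbb{D})\int_{\mathbb{D}}X_s^2\,dz$. For $s\in S^1$, a direct computation in polar coordinates using $X_s(z)=f(r)\cos(\theta-\alpha)$ yields
\begin{equation*}
\int_{\mathbb{D}}X_s^2\,dz=\int_0^1\!\!\int_0^{2\pi}f^2(r)\cos^2(\theta-\alpha)\,r\,d\theta\,dr=\pi\int_0^1 f^2(r)\,r\,dr,
\end{equation*}
which gives the claimed formula after multiplication by $2\mu_1(\mathbb{D})$. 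There is no real obstacle here; the only point requiring a touch of care is the $H^1$-regularity of the lift across $\partial a$, which ensures that Dirichlet energies add without boundary contribution.
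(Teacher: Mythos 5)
Your proof is correct and follows essentially the same route as the paper: split the energy over $a$ and $a^*$, use conformal invariance of the two-dimensional Dirichlet integral twice (for $\tau_a$ and for $\psi_a$) to get the factor $2\int_{\mathbb{D}}|\nabla X_s|^2\,dz$, then use the eigenvalue equation for $X_s$. The only cosmetic difference is that you compute $\int_{\mathbb{D}}X_s^2\,dz$ directly in polar coordinates while the paper uses a symmetry argument with two orthogonal directions; both give $\pi\int_0^1 f^2(r)\,r\,dr$.
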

\begin{lemma}\label{l2estimate}
  \begin{gather}
    \int_{\mathbb{D}}(\tilde{u}_a^s)^2\,d\mu\geq\pi\left(\int_{r=0}^1f^2(r)r\,dr\right).
  \end{gather}
\end{lemma}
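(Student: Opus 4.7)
The plan is to convert $\int(\tilde u_a^s)^2\,d\mu$ into the integral of a monotone radial function against a subharmonic density, and then show that density outperforms Lebesgue measure via the monotonicity of circular averages of subharmonic functions. I assume throughout that $\area(\Omega)=\pi$, which is WLOG since $\mu_2(\Omega)\area(\Omega)$ is scale-invariant.

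First I would unwind the integral. By the lift identity recorded in Section~\ref{subsection:testfunctions} applied to $(u_a^s)^2$, we have $\int_{\mathbb{D}}(\tilde u_a^s)^2\,d\mu=\int_a (u_a^s)^2\,d\mu_a$; and by the change of variables $w=\psi_a(z)$ together with $u_a^s\circ\psi_a=X_s$, this equals $\int_{\mathbb{D}} X_s^2\,d\nu_a$. The cap $a$ was selected so that $d\nu_a$ is multiple, so by Lemma~\ref{multcaplemma} the quantity $V(s)=\int X_s^2\,d\nu_a$ is independent of $s\in S^1$. Using the identity $X_{e_1}^2+X_{e_2}^2=f^2(|z|)$, I then average $V$ over $\{e_1,e_2\}$:
\[
\int_{\mathbb{D}} (\tilde u_a^s)^2\,d\mu\;=\;V(s)\;=\;\tfrac12\bigl(V(e_1)+V(e_2)\bigr)\;=\;\tfrac12\int_{\mathbb{D}}f^2(|z|)\,d\nu_a.
\]
Since $\int_{\mathbb{D}} f^2(|z|)\,dz=2\pi\int_0^1 f^2(r)\,r\,dr$, the lemma reduces to the inequality $\int_{\mathbb{D}} f^2(|z|)\,d\nu_a\geq \int_{\mathbb{D}} f^2(|z|)\,dz$.

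Next I would identify the density of $d\nu_a$. Pulling $d\mu_a=d\mu+\tau_a^*d\mu$ back by $\psi_a$ and using $d\mu=|\phi'|^2\,dw$, one obtains
\[
d\nu_a\;=\;\bigl(|(\phi\circ\psi_a)'|^2+|(\phi\circ\tau_a\circ\psi_a)'|^2\bigr)\,dz.
\]
The first summand is the modulus squared of the derivative of a holomorphic map, and the second is the analogous quantity for an antiholomorphic map (since $\tau_a$ is orientation-reversing). Each is subharmonic: $\Delta|F'|^2=4|F''|^2\geq 0$ in the holomorphic case, and the antiholomorphic case reduces to the holomorphic one by conjugation. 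Hence $\rho:=d\nu_a/dz$ is subharmonic on $\mathbb{D}$ with $\int_{\mathbb{D}}\rho\,dz=\area(\Omega)=\pi$.

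Finally I would apply a monotone rearrangement. Setting $g:=\rho-1$ (subharmonic with zero mean), Green's identity gives $2\pi r\, G'(r)=\int_{|z|\leq r}\Delta g\geq 0$, so the circular average $G(r)=\frac{1}{2\pi}\int_0^{2\pi} g(re^{i\theta})\,d\theta$ is non-decreasing. Because $\zeta$ is the first positive critical point of $J_1$, the function $f(r)=J_1(\zeta r)$ is positive and strictly increasing on $[0,1]$, and hence so is $f^2$. Picking $r_0\in[0,1]$ at which the non-decreasing function $G$ changes sign, we have $(f^2(r)-f^2(r_0))\,G(r)\geq 0$ on all of $[0,1]$ (both factors have the same sign on each subinterval), giving
\[
\int_0^1 f^2(r)\,G(r)\,r\,dr\;\geq\; f^2(r_0)\int_0^1 G(r)\,r\,dr\;=\;0.
\]
Multiplying by $2\pi$ yields $\int_{\mathbb{D}} f^2(|z|)\,g\,dz\geq 0$, i.e., $\int f^2\,d\nu_a\geq \int f^2\,dz$, which is what we needed. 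The step requiring the most care is the subharmonic factorization of $\rho$: it is this structural fact, rather than the renormalization conditions themselves, that forces $d\nu_a$ to put more mass on the radial function $f^2(|z|)$ than Lebesgue measure does.
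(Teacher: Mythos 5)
Your proof is correct and follows essentially the same route as the paper: it rests on the same two pillars, namely the subharmonicity of the density of the rearranged measure $d\nu_a$ (the paper's Lemma \ref{subharm}, which you verify by the direct computation $\Delta|F'|^2=4|F''|^2$ for the holomorphic and antiholomorphic pieces instead of exponentiating a harmonic function) and the monotonicity of circular means of a subharmonic function combined with the monotonicity of $f^2$, after the identical reduction via multiplicity to $\tfrac12\int_{\mathbb{D}}f^2(|z|)\,d\nu_a$. The only cosmetic differences are in the final step, where you use a Chebyshev-type sign argument on the zero-mean circular averages in place of the paper's bound $G(r)\le\pi r^2$ followed by integration by parts, and in making explicit the normalization $\area(\Omega)=\pi$ that the paper uses implicitly when writing $G(1)=\pi$.
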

\begin{proof}[Proof of Lemma~~\ref{dirichletestimate}]
  It follows from the definition of the lift that
  \begin{align*}
    \int_{\mathbb{D}}|\nabla \tilde{u}_a^s|^2\, dz=
    \int_{a}|\nabla u_a^s|^2\,dz+\int_{a^*}|\nabla (u_a^s\circ\tau_a)|^2\,dz.
  \end{align*}
  By conformal invariance of the Dirichlet energy, the two terms on the right hand side are
  equal, so that
  \begin{align}
  \label{doubling}
    \int_{\mathbb{D}}|\nabla \tilde{u}_a^s|^2\, dz&=2\int_{a}|\nabla u_a^s|^2\,dz=
    2\int_{a}|\nabla (X_s\circ \psi_a^{-1})|^2\,dz\nonumber\\
    &=2\int_{\mathbb{D}}|\nabla X_s|^2\,dz\nonumber\hspace{.5cm}\longleftarrow\mbox{ (by conformal invariance)}\\
    &=2\mu_1(\mathbb{D})\int_{\mathbb{D}}X_s^2\,dz\hspace{.5cm}\longleftarrow(\mbox{since }X_s\mbox{ is the first eigenfunction on a disk})
  \end{align}
  It follows from \eqref{Xs} that given two orthogonal directions $s,t\in S^1$ we have
  $$\int_{\mathbb{D}}(X_{s}^2+X_{t}^2)\,dz=\int_{\mathbb{D}}f^2(|z|)\,dz.$$

 Therefore,  by symmetry we get
  \begin{align*}
      \int_{\mathbb{D}}X_s^2\,dz=\frac{1}{2}\int_{\mathbb{D}}f^2(|z|)\,dz
      =\pi\int_{r=0}^1f^2(r)r\,dr.
  \end{align*}
  This completes the proof of the lemma.
\end{proof}
To prove Lemma \ref{l2estimate} we use the following result.
\begin{lemma}
\label{subharm} The rearranged measure $d\nu_a$ on $\mathbb{D}$ can
be represented as $d\nu_a=\delta(z)dz$, where
$\delta:\mathbb{D}\rightarrow\mathbb{R}$ is a subharmonic function.
\end{lemma}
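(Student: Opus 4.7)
The plan is to write the density $\delta$ explicitly as a sum of two non-negative functions of the form $|h|^2$ with $h$ holomorphic on $\mathbb{D}$, and then invoke the standard fact that $|h|^2$ is subharmonic for any holomorphic $h$ (since $\Delta|h|^2 = 4|h'|^2 \geq 0$).

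First I would unpack the definitions. Since $d\mu = \phi^*(dz)$ for the conformal equivalence $\phi:\mathbb{D}\to\Omega$, and $d\mu_a = d\mu + \tau_a^*d\mu$ on $a$ (with $d\mu_a \equiv 0$ on $\mathbb{D}\setminus\overline{a}$), the pullback by $\psi_a:\mathbb{D}\to a$ gives
\[
d\nu_a \;=\; \psi_a^*d\mu_a \;=\; (\phi\circ\psi_a)^*(dz) \;+\; (\phi\circ\tau_a\circ\psi_a)^*(dz)
\]
on all of $\mathbb{D}$. Both terms are pullbacks of planar Lebesgue measure under smooth conformal or anti-conformal maps.

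Next I would identify the holomorphic type of the two maps. The map $F_1 := \phi\circ\psi_a$ is holomorphic on $\mathbb{D}$, so $F_1^*(dz) = |F_1'(w)|^2\,dw$. For the second term, recall that $\tau_a = d_{rp}\circ R_p\circ d_{-rp}$ with $R_p(z)=-p^2\bar z$ anti-holomorphic while $d_{\pm rp}$ are holomorphic; hence $\tau_a$, and therefore $G := \phi\circ\tau_a\circ\psi_a$, is anti-holomorphic on $\mathbb{D}$. Setting $F_2 := \overline{G}$, the function $F_2$ is holomorphic, and a direct Jacobian computation yields $G^*(dz) = |F_2'(w)|^2\,dw$. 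Consequently
\[
\delta(w) \;=\; |F_1'(w)|^2 \;+\; |F_2'(w)|^2.
\]

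To conclude, the derivatives $F_1'$ and $F_2'$ are themselves holomorphic on $\mathbb{D}$, so both $|F_1'|^2$ and $|F_2'|^2$ are subharmonic, and their sum $\delta$ is subharmonic. The only mildly technical point is the pullback formula in the anti-holomorphic case: for $G = \overline{F_2}$ with $F_2$ holomorphic, the real Jacobian determinant of $G$ equals $-|F_2'|^2$, so its absolute value (which enters the pullback of $dz$) is $|F_2'|^2$. I do not expect this to be a serious obstacle; the work amounts to keeping the bookkeeping straight when $\tau_a$ reverses orientation.
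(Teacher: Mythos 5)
Your proof is correct and follows essentially the same route as the paper: the decomposition of $d\nu_a$ into the two pullbacks $\psi_a^*d\mu$ and $\psi_a^*(\tau_a^*d\mu)$ is exactly the paper's, and your explicit identification of the densities as $|F_1'|^2$ and $|F_2'|^2$ with $F_1,F_2$ holomorphic is a more concrete version of what the paper phrases via flat metrics in isothermal coordinates ($\Delta\log\alpha=\Delta\log\beta=0$, so each density is the exponential of a harmonic function, hence subharmonic). Your handling of the orientation-reversing map $\tau_a$ (Jacobian $-|F_2'|^2$, absolute value $|F_2'|^2$) is also correct, so there is nothing to fix.
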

\begin{proof}
Indeed, $d\nu_a=\psi_a^*d\mu_a$, where the
  measure $d\mu_a$ on the cap $a$ is obtained as the sum of measures
  $d\mu$ and $\tau_a^* d\mu$.
  Both measures $d\mu$ and $\tau_a^* d\mu$
  correspond to flat Riemannian metrics on $a$, because $d\mu$ is the
  pullback of the Euclidean measure $dz$ on the domain $\Omega$ by the conformal map
   $\phi:\mathbb{D}\to \Omega$ (see section
  \ref{section:var}). Since the maps
  $\psi_a$ and $\tau_a$ are also conformal,
  one has $\psi_a^*d\mu=\alpha(z)dz$ and
  $\psi_a^*(\tau_a^*d\mu)=\beta(z)dz$ for some {\it subharmonic} functions
  $\alpha(z), \, \beta(z)$. Indeed, the
  metrics corresponding to these measures are flat
  (they are pullbacks by $\psi_a$ of flat metrics on $a$ that we mentioned above), and
  the well-known formula for the Gaussian curvature in isothermal coordinates
  yields $\Delta \log \alpha(z)=\Delta \log
  \beta(z)=0$ (cf. \cite[p. 663]{BR}).  Therefore, $\alpha(z)$ and $\beta(z)$ are
  subharmonic as exponentials of harmonic functions \cite[p. 45]{Levin}.
  Finally, $d\nu_a=\delta(z) dz$, where $\delta(z)=\alpha(z)+\beta(z)$ is subharmonic
  as a sum of subharmonic functions. This completes the proof of the
  lemma.
\end{proof}

\begin{proof}[Proof of Lemma~\ref{l2estimate}]
Set $$G(r)=\int_{B(0,r)}\delta(z)\,dz=\int_0^r \int_0^{2\pi}
\delta(\rho\, e^{i\phi})\rho\, d\rho\, d\phi.$$ By Lemma
\ref{subharm} the function $\delta$ is subharmonic. The
function
$$W(\rho)=\int_0^{2\pi} \delta(\rho \,e^{i\phi}) d\phi$$
is $2\pi$ times the average of $\delta$ over the circle of radius
$\rho$, hence it is monotone non-decreasing in $\rho$ (\cite[p.
46]{Levin}). Therefore, since $r\le 1$, we get as in
\cite[p.138]{SY} that
\begin{multline}
\label{inetchach} G(r)=\int_0^r W(\rho) \rho\, d\rho =\\ r^2
\int_0^1 W(r\, \rho) \rho \, d\rho\le r^2\int_0^1 W(\rho)\, \rho \,
d\rho = r^2 G(1)=\pi r^2.
\end{multline}
Now,  because $\tilde{u}_a^s$ is the lift of  $u_a^s=X_s\circ
\psi_a$, we have
  \begin{gather*}
    \int_{\mathbb{D}}(\tilde{u}_a^s)^2\,d\mu=\int_{a}(u_a^s)^2\,d\mu_a
    =\int_{\mathbb{D}}X_s^2\,d\nu_a.
  \end{gather*}
  Moreover since $V_a(s)$ doesn't depend on $s\in S^1$,
  \begin{align}
    V_a(s)=\int_{\mathbb{D}}X_s^2\,d\nu_a&=
    \frac{1}{2}\int_{\mathbb{D}}\left(X_{e_1}^2+X_{e_2}^2\right)\,d\nu_a\nonumber\\
    &=\frac{1}{2}\int_{\mathbb{D}}f^2(|z|)\,\delta(z)\,dz
    =\frac{1}{2}\int_{r=0}^1f^2(r)G'(r)\,dr\label{unestime}
  \end{align}
  Integrating by parts and taking into account that $G(r)\le \pi r^2$ due to \eqref{inetchach}, we get
    \begin{align}
    \int_{r=0}^1f^2(r)G'(r)\,dr
    &=f^2(1)G(1)-\int_{0}^1\frac{d}{dr}\bigl(f^2(r)\bigr)G(r)\,dr\,\ge \nonumber\\
    & f^2(1)G(1)-\pi\int_{0}^1\frac{d}{dr}\bigl(f^2(r)\bigr)r^2\,dr=2\pi\int_{0}^1f^2(r)r\,dr\nonumber\\
  \end{align}
  This completes the proof of Lemma \ref{l2estimate} and Proposition~\ref{mainestimate}.
\end{proof}

\smallskip

\begin{remark}
The proof of  Lemma \ref{l2estimate} is quite similar to the proof
of \eqref{szego},  see \cite[p. 348]{Szego1} and  \cite[p. 138]{SY}.
Our approach is somewhat more direct since it explicitly uses the
subharmonic properties of the measure.
\end{remark}

\smallskip

\begin{proof}[Proof of Theorem~\ref{maintheorem}]
Assume that $d\mu$ is simple.
Then \eqref{main:bound} immediately follows from Proposition
\ref{mainestimate} and the variational characterization
\eqref{varicharact} of $\mu_2$.

Suppose now that $d\mu$ is multiple. In fact, the proof is simpler
in this case. Indeed, it follows from Lemma~\ref{multcaplemma}, that
any direction $[s]\in\mathbb{R}P^1$ is maximizing for $d\mu$ so that
we can use the space
$$E=\left\{X_s \,\bigl|\bigr. s\in\mathbb{R}^2\right\}$$
of test functions in the variational
characterization~(\ref{varicharact}) of $\mu_2.$
Inspecting the proof of Proposition
\ref{mainestimate} we notice that the factor $2$ disappears in
\eqref{doubling} and hence in \eqref{fff} as well. Therefore, in
this case we get using \eqref{varicharact} that $\mu_2(\Omega)\le
\mu_1(\mathbb{D})$.
This completes the proof of the theorem.
\end{proof}
\begin{remark}
  When $d\mu$ is multiple, we get a stronger estimate
  $$\mu_2(\Omega)\leq\mu_1(\mathbb{D}).$$
  To illustrate this case, consider $\Omega=\mathbb{D}$. Then
  indeed $\mu_2(\mathbb{D})=\mu_1(\mathbb{D})$.
\end{remark}
\section{Proofs of auxiliary lemmas}
\subsection{Uniqueness of the renormalizing point}
The following lemma is used in the proof Lemma \ref{uniqueness}.
\begin{lemma}
\label{aux1}
 Let $r \in (0,1)$ and $s=1$. Then $X_s(d_r(z))>X_s(z)$
for all $z \in \mathbb{D}$.
\end{lemma}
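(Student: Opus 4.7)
The strategy is to regard $X_s(d_r(z))$ as a function of $r$ and show that, for each fixed $z \in \mathbb{D}$, the map $r \mapsto X_s(d_r(z))$ is strictly increasing on $[0,1)$; since $d_0 = \mathrm{id}$, this gives the claim.

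Setting $w = d_r(z)$, a direct manipulation (or the identity $1 - z^2 = (1-w^2)(1-r^2)/(1-wr)^2$) yields $dw/dr = (1-w^2)/(1-r^2)$. In real coordinates this is a positive multiple of the vector $V(w) = \bigl(\tfrac{1}{2}(1-x^2+y^2),\,-xy\bigr)$ for $w = x+iy$, which is (proportional to) the Killing field generating hyperbolic translation along the real axis. Therefore the sign of $\tfrac{d}{dr}X_s(d_r(z))$ equals the sign of $\nabla X_s(w) \cdot V(w)$, and it suffices to show
\[
\nabla X_s(w) \cdot V(w) > 0 \quad \text{for every } w \in \mathbb{D}.
\]

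Writing $X_s(w) = x\,g(\rho)$ with $\rho = |w|$ and $g(\rho) = f(\rho)/\rho = J_1(\zeta\rho)/\rho$, a short calculation reorganizes the Euclidean dot product as
\[
2\,\nabla X_s(w)\cdot V(w) = (1-\rho^2)\left[g(\rho) + \frac{x^2}{\rho}\,g'(\rho)\right] + 2\,y^2\,g(\rho).
\]
To control the signs I would use two Bessel-function facts. The identity $(J_1(x)/x)' = -J_2(x)/x$, together with the bound $\zeta <$ (first positive zero of $J_2$), forces $g > 0$ and $g' < 0$ on $(0,1]$. The defining property that $\zeta$ is the first positive zero of $J_1'$ gives $f'(\rho) = \zeta J_1'(\zeta\rho) \ge 0$ on $[0,1]$. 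Since $g' < 0$ and $x^2 \le \rho^2$,
\[
g(\rho) + \frac{x^2}{\rho}\,g'(\rho) \;\ge\; g(\rho) + \rho\,g'(\rho) \;=\; \bigl(\rho\,g(\rho)\bigr)' \;=\; f'(\rho) \;\ge\; 0,
\]
so both summands in the displayed identity are nonnegative.

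For strict positivity: if $y \neq 0$, the term $2y^2 g(\rho)$ is strictly positive; if $y = 0$ with $0 < \rho < 1$, then $x^2 = \rho^2$ forces the bracket to equal $f'(\rho) > 0$. The remaining case $z = 0$ is handled directly: $X_s(d_r(0)) = X_s(r) = f(r) > 0 = X_s(0)$. The main obstacle is the sign analysis just given; it rests on the explicit Bessel identities for $g$ and $f'$, which are the only nontrivial inputs beyond the hyperbolic-geometry reinterpretation of $d_r$.
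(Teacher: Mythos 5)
Your proposal is correct, but it takes a genuinely different route from the paper. The paper argues directly and case-by-case: writing $z=a+ib$, it splits into $a\ge 0$; $a<0$, $a+r\le 0$; and $a<0$, $a+r>0$, and in each case compares the moduli $|z|$, $|d_r(z)|$ and the arguments $\theta_1,\theta_2$ through tangent/cosine monotonicity, the only analytic input on $f$ being that it is positive and increasing on $(0,1]$. You instead differentiate along the curve $r\mapsto w=d_r(z)$: your identity $\frac{dw}{dr}=\frac{1-w^2}{1-r^2}$ is correct, so the lemma reduces to the pointwise inequality $\nabla X_s(w)\cdot V(w)>0$ for the Killing field $V$ of hyperbolic translation along the real axis, and your algebraic identity $2\,\nabla X_s\cdot V=(1-\rho^2)\bigl[g(\rho)+\frac{x^2}{\rho}g'(\rho)\bigr]+2y^2g(\rho)$ checks out, as does the chain $g+\frac{x^2}{\rho}g'\ge g+\rho g'=f'\ge 0$. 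This buys a shorter and more conceptual proof (monotonicity of $X_s$ along the translation flow, with the Bessel structure isolated in one differential inequality), at the price of an extra Bessel fact the paper never needs: $g'<0$ on $(0,1]$, i.e. $(J_1(x)/x)'=-J_2(x)/x$ together with $\zeta<j_{2,1}\approx 5.14$.

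Two small points to patch. First, $g>0$ does not follow from $g'<0$ alone; deduce it instead from $f(0)=0$ and $f'>0$ on $[0,1)$ (the same standard facts the paper quotes), or from $\zeta<j_{1,1}$. Second, your strictness discussion covers $w\neq 0$ and you treat the initial point $z=0$ separately, but when $z\in(-1,0)$ the moving point $w=d_r(z)$ passes through the origin at $r=-z$, where the bracket formula only makes sense as a limit; either note that $\nabla X_s(0)\cdot V(0)=\zeta/4>0$ (equivalently, the bracket tends to $f'(0)>0$), or observe that strict increase already follows from positivity of the derivative away from a single parameter value. With these one-line fixes the argument is complete.
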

\begin{proof}
\noindent We have $X_s(z)=f(|z|)\cos \theta_1$ and
$X_s(d_r(z))=f(|d_r(z)|)\cos \theta_2$, where $\theta_1=\arg z$ and
$\theta_2=\arg d_r(z)$. We need to show
\begin{equation}
\label{mainapp}
 f(|d_r(z)|)\cos \theta_2 > f(|z|)\cos\theta_1
\end{equation}
for all $z \in \mathbb{D}$. Note that the function $f$ is monotone
increasing, positive on the interval $(0,1]$, and $f(0)=0$. Set
$z=a+ib$. It is easy to check that for $|z|=0$ the inequality in
question is satisfied and therefore in the sequel we assume that
$a^2+b^2>0$.

Let us compare $|z|$ and $|d_r(z)|$. We note that $|z|=|\bar z|$.
Since $$|d_r(z)|=\frac{|z+r|}{|rz+1|},$$ we need to compare $|z+r|$
and $|r|z|^2+\bar z|$. This boils down to comparing $(a+r)^2+b^2$
and $((r(a^2+b^2)+a)^2+b^2$, or, equivalently, $(a+r)^2$ and
$((r(a^2+b^2)+a)^2$. Note that $a^2+b^2<1$ since $z \in \mathbb{D}$.
We have three cases:

\smallskip

\noindent (i) $a \ge 0$. Then $|d_r(z)|>|z|$.

\noindent (ii) $a<0$ and $a+r\le 0$. Then $|d_r(z)| < |z|$.

\noindent (iii) $a<0$ and $a+r>0$.

\smallskip

Let us now study the arguments $\theta_1$ and $\theta_2$.

We have:
$$
d_r(z)=\frac{z+r}{rz+1}=\frac{(a+r)+ib}{(ar+1)+ibr}=\frac{(a+r)(ar+1)+b^2r+ib(1-r^2)}{(ar+1)^2+b^2r^2}$$
Taking into account that $ar+1>0$, we obtain from this formula that
in case (iii) $\cos \theta_2>0$. On the other hand, $\cos
\theta_1<0$ in this case, and therefore the inequality
\eqref{mainapp} is satisfied since $f>0$.

Consider now case (i). Using the formula above we get that
$$\tan \theta_2=\frac{b(1-r^2)}{(a+r)(ar+1)+b^2r}.$$ If $a=0$ then \eqref{mainapp} is true since $\cos \theta_1=0$ and
one may easily check that $\cos \theta_2 >0$. So let us assume that
$a\ne 0$. Then $\tan\theta_1 =b/a$. Note that the tangent is a
monotone increasing function. If $b=0$ then $\theta_1=\theta_2=0$
and \eqref{mainapp} is satisfied since $|d_r(z)|>|z|$. If $b\ne 0$,
dividing by $b$ and taking into account that $a>0$, $r>0$ we easily
get:
$$
\frac{1}{a}>\frac{1-r^2}{(a+r)(ar+1)+b^2r}.
$$
Therefore, if $b>0$ we get that $\tan\theta_1 > \tan\theta_2$
implying $0<\theta_2<\theta_1<\pi/2$,  and if $b<0$ we get that
$\tan\theta_1 <\tan\theta_2$ implying that $3\pi/2<\theta_1
<\theta_2<2\pi$. At the same time, in the first case the cosine is
monotonely decreasing, and in the second case the cosine is
monotonely increasing. Therefore, for any $b\ne 0$ we get $0<\cos
\theta_1 <\cos \theta_2$, which implies \eqref{mainapp}.

Finally, consider the case (ii). If $(a+r)(ar+1)+b^2r \ge 0$ then we
immediately get \eqref{mainapp} since in this case $\cos \theta_2
\ge 0$ and $\cos \theta_1 <0$. So let us assume  $(a+r)(ar+1)+b^2r <
0$. If $b=0$ then $\theta_1=\theta_2=\pi$, hence $\cos
\theta_1=\cos\theta_2=-1$ and \eqref{mainapp} is satisfied because
$|d_r(z)| < |z|$. If $b\ne 0$, as in case (ii) we compare $\tan
\theta_1$ and $\tan \theta_2$. We claim that again
$$
\frac{1}{a}>\frac{1-r^2}{(a+r)(ar+1)+b^2r}.
$$
Since by our hypothesis the denominators in both cases are negative,
it is equivalent to $a-ar^2 < a^2r+ar^2+a+r + b^2r$. After obvious
transformations we see that this reduces to
$a^2+2ar+1+b^2=(a+r)^2+(1-r^2)+b^2
>0$ which is true.

Therefore, taking into account that tangent is monotone increasing,
we get that if $b>0$ then $\pi/2<\theta_2<\theta_1<\pi$, and if
$b<0$ then $\pi<\theta_1<\theta_2<3\pi/2$. This implies that in
either case $\cos \theta_1<\cos\theta_2<0$. Together with the
inequality $|d_r(z)| < |z|$ this gives \eqref{mainapp} in case (ii).
This completes the proof of the lemma.
\end{proof}

\subsection{Proof of  Lemma \ref{flipfloplemma}}
\label{section:flip} Let $\mathcal{M}$ be the space of signed finite
measures on $\mathbb{D}$ endowed with the norm \eqref{norm}. Recall
that the map
$\Gamma:\mathcal{M}\rightarrow\mathbb{D}$ is defined by
$\Gamma(d\nu)=\xi$ in such a way that
$d_{\xi}:\mathbb{D}\rightarrow\mathbb{D}$ renormalizes $d\nu$.
It is continuous by Corollary~\ref{cont}.
The key idea of the proof of the ``flip-flop'' lemma is to replace
the folded measure $d\mu_a$ by $$d\hat{\mu}_a:=(\tau_a)_*d\mu.$$ It is
clear that
\begin{equation}
\label{appr}
||d\mu_a-d\hat{\mu}_a||\rightarrow 0
\end{equation}
in the norm \eqref{norm} as $a$ degenerates to a point $p \in
\partial\mathbb{D}$. At the same time, the next lemma shows that
the ``flip-flop''property is true for {\it each} cap when the
rearranged measure $d\nu_a$ is replaced by
$(d\zeta_a)_*d\hat{\mu}_a$, where $\zeta_a=\Gamma(d\hat{\mu}_a)$.
\begin{lemma}
  Let $a=a_{r,p}$ be a hyperbolic cap.
  Then
  $$(d\zeta_a)_*d\hat{\mu}_a=(d_{\zeta_a})_*(\tau_a)_*d\mu=R_p^*d\mu.$$
\end{lemma}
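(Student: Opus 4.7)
The plan is to reduce the equality $(d_{\zeta_a})_* (\tau_a)_* d\mu = R_p^* d\mu$ to an identity of maps $d_\zeta \circ \tau_a = R_p$ for some explicit $\zeta \in \mathbb{D}$, and then use the uniqueness statement of Lemma~\ref{uniqueness} to identify this $\zeta$ with $\zeta_a$.

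First, I would check that $R_p^* d\mu$ is already renormalized. Since $|p|=1$, the map $R_p(z) = -p^2\bar z$ is an Euclidean orthogonal reflection and in particular an involution, so $R_p^* = (R_p)_*$ as operations on measures. From the explicit formula \eqref{Xs}, a direct substitution shows $X_s \circ R_p = X_{R_p s}$, because $R_p$ is orthogonal and preserves moduli. Since $d\mu$ satisfies \eqref{conventionI}, it follows that
\[
\int_\mathbb{D} X_s \,(R_p)_* d\mu \;=\; \int_\mathbb{D} X_{R_p s}\,d\mu \;=\; 0
\]
for every $s$, so $\Gamma(R_p^* d\mu) = 0$.

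The heart of the argument is the assertion that $R_p \circ \tau_a$ is a disk automorphism of the form $d_\zeta$. Using $\tau_a = d_{rp}\circ R_p \circ d_{-rp}$, I would first verify the conjugation identity
\[
R_p \circ d_{rp} \circ R_p \;=\; d_{-rp}
\]
by a direct substitution, where the cancellations rely only on $p\bar p = 1$. Combined with $\tau_a$'s definition, this yields
\[
R_p \circ \tau_a \;=\; d_{-rp} \circ d_{-rp},
\]
and the composition formula for Möbius transformations of $\mathbb{D}$ (the same kind used in the proof of Lemma~\ref{uniqueness}) shows that $d_{-rp}\circ d_{-rp} = d_\zeta$ for some $\zeta \in \mathbb{D}$. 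One can compute $\zeta = -\tfrac{2rp}{1+r^2}$ explicitly, but this is not needed: only the existence of such a $\zeta$ matters.

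Given such a $\zeta$, the identity of maps $d_\zeta \circ \tau_a = R_p$ pushes forward to the equality of measures $(d_\zeta)_* (\tau_a)_* d\mu = (R_p)_* d\mu = R_p^* d\mu$. Since $R_p^* d\mu$ is renormalized by the first step, Lemma~\ref{uniqueness} forces $\zeta$ to coincide with $\Gamma(d\hat\mu_a) = \zeta_a$, which proves the lemma. The only obstacle is purely computational, namely verifying the conjugation identity $R_p \circ d_{rp} \circ R_p = d_{-rp}$; this is short but requires some care in manipulating the complex-conjugate factor $\bar p = p^{-1}$ appearing in the definition of $d_{rp}$.
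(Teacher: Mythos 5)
Your proposal is correct and follows essentially the same route as the paper: both arguments reduce the lemma to the map identity $d_{\zeta}\circ\tau_a=R_p$ for an automorphism $d_\zeta$ (the paper asserts this by a direct computation with the explicit value $\zeta_a=-\frac{2r}{1+r^2}p$, while you obtain it from the conjugation identity $R_p\circ d_{rp}\circ R_p=d_{-rp}$ together with the composition formula from the proof of Lemma~\ref{uniqueness}), and then both conclude via $X_s\circ R_p=X_{R_ps}$, condition \eqref{conventionI}, and uniqueness of the renormalizing point. The only detail worth making explicit is that the unimodular factor $\frac{1-\eta\bar\xi}{1-\bar\eta\xi}$ in the composition formula equals $1$ here (since $\eta\bar\xi=-r^2$ is real), so $d_{-rp}\circ d_{-rp}$ is a genuine $d_\zeta$ with no extra rotation, which is needed to get exactly $R_p^*d\mu$ rather than a rotated copy of it.
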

\begin{proof}Let us show that
    $\zeta_a=-\frac{2r}{r^2+1}p$.
    Recall that $\tau_a(z)=d_{rp}\circ R_p\circ d_{-rp}.$
    A simple explicit computation then leads to
    \begin{align*}
      d_{\zeta_a}\circ\tau_a&=R_p.
    \end{align*}
    This implies
    \begin{align*}
      \int_{\mathbb{D}}X_s\circ d_{\zeta_a}\,d\hat{\mu}_a&=
      \int_{\mathbb{D}}X_s\circ d_{\zeta_a}\circ\tau_a\,d\mu\\
      &=\int_{\mathbb{D}}X_s\circ R_p\,d\mu
      =\int_{\mathbb{D}}X_{R_ps}\,d\mu=0\\
    \end{align*}
    which proves the claim.
  \end{proof}

Let $\eta_a:=\Gamma((d_{\zeta_a})_*d\mu_a)$ be the renormalizing
vector for the measure $(d_{\zeta_a})_*d\mu_a$.
\begin{lemma}
\label{lemma:eta}
  As the cap $a$ degenerates to a point $p\in\partial\mathbb{D}$,
  $\eta_a\rightarrow 0.$
\end{lemma}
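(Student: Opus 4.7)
The plan is to transfer the comparison between the folded measure $d\mu_a$ and its auxiliary replacement $d\hat\mu_a$ through the normalizing automorphism $d_{\zeta_a}$, and then invoke the continuity of $\Gamma$ from Corollary~\ref{cont}. By the preceding lemma, $(d_{\zeta_a})_* d\hat\mu_a = R_p^* d\mu$. This measure is itself renormalized: using the identity $X_s\circ R_p = X_{R_ps}$ (which follows since $R_p$ is a linear reflection, so $|R_p(z)| = |z|$ and $R_p(z)\cdot s = z\cdot R_p s$) together with \eqref{conventionI},
$$\int_\mathbb{D} X_s\, d(R_p^*d\mu) = \int_\mathbb{D} X_s\circ R_p\, d\mu = \int_\mathbb{D} X_{R_ps}\, d\mu = 0,$$
so $\Gamma(R_p^*d\mu) = 0$.

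It therefore suffices to show $(d_{\zeta_a})_* d\mu_a \to R_p^*d\mu$ in the dual norm \eqref{norm}; the difference equals $(d_{\zeta_a})_*(d\mu_a - d\hat\mu_a)$. Since $d_{\zeta_a}$ is a homeomorphism of $\overline{\mathbb{D}}$, the map $f\mapsto f\circ d_{\zeta_a}$ is a bijection of the unit ball of $C(\overline{\mathbb{D}})$, so push-forward by $d_{\zeta_a}$ preserves the dual norm. This reduces matters to the estimate $\|d\mu_a - d\hat\mu_a\|\to 0$, which I would prove by unwinding the definitions: since $\tau_a$ is an involution swapping $a$ and $a^*$, one computes
$$\int_\mathbb{D} f\, d\mu_a = \int_a f\,d\mu + \int_{a^*} f\circ\tau_a\,d\mu, \qquad \int_\mathbb{D} f\, d\hat\mu_a = \int_\mathbb{D} f\circ\tau_a\,d\mu,$$
so $\int_\mathbb{D} f\,(d\mu_a - d\hat\mu_a) = \int_a (f - f\circ\tau_a)\,d\mu$, which is bounded by $2\mu(a)$ whenever $\|f\|_\infty\le 1$. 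As $a$ degenerates to $p$, the cap $a$ shrinks and $\mu(a)\to 0$ since $d\mu$ is absolutely continuous with respect to Lebesgue measure.

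Combining these two bounds yields $(d_{\zeta_a})_* d\mu_a \to R_p^*d\mu$ in norm, and then Corollary~\ref{cont} gives $\eta_a = \Gamma((d_{\zeta_a})_* d\mu_a) \to \Gamma(R_p^*d\mu) = 0$. The main potential pitfall is that the automorphisms $d_{\zeta_a}$ themselves degenerate as $\zeta_a \to -p\in\partial\mathbb{D}$, but this is harmless: the inequality $\|(d_{\zeta_a})_*\nu\|\le\|\nu\|$ is applied to each individual cap $a$, where $\zeta_a\in\mathbb{D}$ and $d_{\zeta_a}$ is a genuine homeomorphism of $\overline{\mathbb{D}}$, so no uniform control in $a$ is needed.
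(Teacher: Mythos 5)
Your proof is correct and follows essentially the same route as the paper: split $(d_{\zeta_a})_*d\mu_a$ into $(d_{\zeta_a})_*(d\mu_a-d\hat{\mu}_a)+(d_{\zeta_a})_*d\hat{\mu}_a$, use that push-forward by the homeomorphism $d_{\zeta_a}$ is an isometry for the norm \eqref{norm} together with $d_{\zeta_a}\circ\tau_a=R_p$, and conclude by continuity of $\Gamma$ and the fact that $R_p^*d\mu$ is renormalized via $X_s\circ R_p=X_{R_ps}$ and \eqref{conventionI}. The only difference is that you spell out the estimate behind \eqref{appr} (the bound $\int_a(f-f\circ\tau_a)\,d\mu\le 2\mu(a)$ and $\mu(a)\to 0$), which the paper simply declares to be clear; this is a welcome but inessential addition.
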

\begin{proof}
 Since $d_{\zeta_a}$
is a diffeomorphism,
$(d_{\zeta_a})_*:\mathcal{M}\rightarrow\mathcal{M}$ is
  an isometry so that
  \begin{align*}
    (d_{\zeta_a})_*d\mu_a&=(d_{\zeta_a})_*(d\mu_a-d\hat{\mu}_a)+(d_{\zeta_a})_*d\hat{\mu}_a\\
    &=\underbrace{(d_{\zeta_a})_*(d\mu_a-d\hat{\mu}_a)}_{\rightarrow 0}+(\underbrace{d_{\zeta_a}\circ\tau_a}_{R_p})_*d\mu
    \rightarrow (R_p)_*d\mu.
  \end{align*}
   Here we have used \eqref{appr}. Continuity of $\Gamma$ leads to
  $$0=\Gamma((R_p)_*d\mu)=\lim_{a\rightarrow p}\Gamma((d_{\zeta_a})_*d\mu_a)=\lim_{a\rightarrow p}\eta_a.$$
   Note that the first equality follows from \eqref{conventionI} and
   the identity $X_s\circ R_p=X_{R_p s}$ that we used earlier.
\end{proof}
Set
\begin{gather}
\label{etazeta}
  q(a)=\frac{\overline{\zeta_a}\eta_a+1}{\zeta_a\overline{\eta_a}+1},\ \ \
\xi(a)=d_{\zeta_a}(\eta_a)=\left(\frac{\eta_a+\zeta_a}{\overline{\zeta_a}\eta_a+1}\right).
\end{gather}
A direct computation (cf. the proof of Lemma \ref{uniqueness}) leads
to
$$\tilde T_a(z):=d_{\eta_a}\circ d_{\zeta_a}=q(a)d_{\xi(a)}(z).$$
It follows from its definition that $\tilde T_a$ renormalizes
$d\mu_a$. Hence, $\Gamma(d\mu_a)=\xi(a)$ and $d_{\xi(a)}=T_a$, where
the transformation $T_a$ was defined in section
\ref{section:folding}. We have
\begin{align*}
  {T_a}_*d\mu_a&=(\frac{1}{q(a)}d_{\eta_a})_*(d_{\zeta_a})_*d\mu_a\\
  &=(\frac{1}{q(a)}d_{\eta_a})_*(d_{\zeta_a})_*\left(d\hat{\mu}_a+(d\mu_a-d\hat{\mu}_a)\right).
\end{align*}
Now, it follows from Lemma \ref{lemma:eta} that $\lim_{a\rightarrow
p}q(a)=1$ and $\lim_{a\rightarrow p}d_{\eta_a}=\mbox{id}$, because
$\eta_a\rightarrow 0$. Therefore, taking into account \eqref{appr}
we get
$$\lim_{a\rightarrow p}{T_a}_*d\mu_a=\lim_{a\rightarrow p}(d_{\xi_a})_*d\hat{\mu}_a=R_p^*d\mu.$$

To complete the proof of Lemma \ref{flipfloplemma} it remains to
show that as the cap $a$ degenerates to $p$,
$||{T_a}_*d\mu_a-d\nu_a||\rightarrow 0$. By definition
$d\nu_a=\psi_a^* d\mu$, where $\psi_a=(T'_a\circ\phi_b \circ
T_a)^{-1}$ (see section \ref{section:folding}).  Let us show that
$b=T_a(a) \to \mathbb{D}$ as $a \to p$. Indeed,
$$T_a=d_{\xi(a)}=
d_{\zeta_a}\circ (d_{-\zeta_a}\circ d_{\xi(a)})=R_p\circ \tau_a\circ
(d_{-\zeta_a}\circ d_{\xi(a)}).$$ Since $\eta_a \to 0$ when $a\to
p$, it follows from \eqref{etazeta} that the composition
$d_{-\zeta_a}\circ d_{\xi(a)}$ tends to identity. Therefore, the cap
$T_a(a)$ gets closer to $\mathbb{D}\setminus R_p(a)$ when $a$ goes
to $p$ and thus $\lim_{a\to p} T_a(a)=\mathbb{D}$. This implies
$\lim_{a\to p} \phi_{T_a(a)} =\mbox{id}$ and $\lim_{a\to p}
T_a'=\mbox{id}$, and hence
%Therefore, $\lim_{a\to p} \psi_a = (T_a)^{-1}$ and hence
$\lim_{a\to p}\, ||{T_a}_*d\mu_a-d\nu_a||=0$. \qed

\section{Proof of Theorem \ref{sphere}}
\subsection{Hersch's lemma and uniqueness of the renormalizing
map}
The proof of Theorem \ref{sphere} is quite similar to the proof of
Theorem \ref{main}. We use the following notation
\begin{align*}
  \mathbb{B}^{n+1}&=\{x\in\mathbb{R}^{n+1}, |x|<1\}\\
  \mathbb{S}^n&=\partial\mathbb{B}^{n+1}.
\end{align*}
The standard round metric on $\mathbb{S}^n$ is $g_0$. Given a
conformally round metric $g \in [g_0]$ we write $dg$ for its induced
measure. Given $s\in\mathbb{R}^{n+1}$, define
$X_s:\mathbb{S}^n\rightarrow\mathbb{R}$ by
$$X_s(x)=(x,s).$$
Similarly to~(\ref{conventionI}) and~(\ref{conventionII}), we
assume that for each $s\in\mathbb{S}^n$:
\begin{gather}
  \int_{\mathbb{S}^n}X_s\, dg=0.\label{sphericalconventionI}\\
  \int_{\mathbb{S}^n}X_{e_1}^2\,dg\geq\int_{\mathbb{S}^n}X_{s}^2\,dg.\label{sphericalconventionII}
\end{gather}
Given $p\in\mathbb{S}^n$, $R_p:\mathbb{R}^{n+1}\rightarrow\mathbb{R}^{n+1}$
is the reflection with respect to the hyperplane going through
$0$ and orthogonal to the segment joining $0$ and $p$, that is
$$R_p(x)=x-2(p,x)p.$$
Given $\xi\in\mathbb{B}^{n+1}.$
define
$d_\xi:\overline{\mathbb{B}}^{n+1}
\to \overline{\mathbb{B}}^{n+1}$ by
\begin{equation}
\label{da} d_\xi(x)=\frac{(1-|\xi|^2)x+(1+2(\xi,x)
+|x|^2)\xi}{1+2(\xi,x)+|\xi|^2|x|^2}.
\end{equation}
Note that $d_\xi(0)=\xi$ and $d_\xi \circ d_{-\xi}=\mbox{id}$. The
map $d_\xi$ is  a conformal (M\"obius)  transformation of
$\mathbb{S}^n$ \cite[p. 142]{SY}. Indeed,  one can check that for
$\xi\neq 0$,
$$d_\xi=\gamma_\xi\circ R_{\frac{\xi}{|\xi|}}$$
where $\gamma_\xi$ is the spherical inversion with center
$\frac{\xi}{|\xi|^2}$ and radius $\frac{1-|\xi|^2}{|\xi|^2}$.
%and this shows that $d_\xi$ is a Möbius transformation \cite{Be}.
Note
that for $n=1$, the map $d_\xi$ coincides with the one introduced in
Lemma~\ref{renormalization}, where complex notation was used for
convenience.

Similarly to the disk case, the transformation $d_\xi$ is said to
{\it renormalize} a measure $d\nu$ on the sphere $\mathbb{S}^n$ if for each
$s\in\mathbb{R}^{n+1}$,
\begin{gather}
  \label{cmass}
  \int_{\mathbb{S}^n}X_s\circ d_\xi\,d\nu=0.
\end{gather}
This condition is clearly equivalent to
\begin{equation*}
   \int_{\mathbb{S}^n} x_i \circ d_\xi\,d\nu =0, \quad
  i=1,2,\dots, n+1,
\end{equation*}
which means that the center of mass of the measure $(d_\xi)_*d\nu$
on $\mathbb{S}^n$ is at the origin. The following result is a
combination of Hersch's lemma \cite{Hersch} and a uniqueness
result announced in \cite{Nad}.
\begin{proposition}\label{renormsphere}
  For any finite measure $d\nu$ on $\mathbb{S}^n$, there exists a
  unique point $\xi\in\mathbb{B}^{n+1}$ such that $d_\xi$ renormalizes
  $d\nu$. Moreover, the dependence of the point $\xi\in\mathbb{B}^{n+1}$
  on the measure $d\nu$ is continuous.
\end{proposition}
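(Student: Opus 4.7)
The plan is to mirror the template of Lemmas \ref{renormalization} and \ref{uniqueness} of the planar case, replacing the topological and monotonicity inputs with their higher-dimensional analogues.

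\emph{Existence.} I would define the continuous center-of-mass map $C:\mathbb{B}^{n+1}\to\mathbb{R}^{n+1}$ by
\[
 C(\xi)=\frac{1}{M}\int_{\mathbb{S}^n}d_\xi(x)\,d\nu(x),\qquad M=\int_{\mathbb{S}^n}d\nu,
\]
so that $\xi$ renormalizes $d\nu$ precisely when $C(\xi)=0$. The key observation is that as $\xi\to p\in\mathbb{S}^n$ the transformation $d_\xi$ concentrates the sphere near $p$, so $C(\xi)\to p$; this yields a continuous extension of $C$ to a map $\overline{\mathbb{B}}^{n+1}\to\overline{\mathbb{B}}^{n+1}$ that restricts to the identity on the boundary. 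A standard degree argument (a continuous self-map of the closed ball which is the identity on the boundary must be surjective) then produces some interior point $\xi\in\mathbb{B}^{n+1}$ with $C(\xi)=0$.

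\emph{Uniqueness.} The crucial higher-dimensional analogue of Lemma \ref{aux1} is a direct computation from the explicit formula \eqref{da}: for $\xi=rp$ with $r\in(0,1)$, $p\in\mathbb{S}^n$, and $x\in\mathbb{S}^n$ with $u=(x,p)$, one finds
\[
X_p(d_{rp}(x))-X_p(x)=\frac{2r(1-u^2)}{1+2ru+r^2}\geq 0,
\]
with equality only at the poles $x=\pm p$. I would first show that if $d\nu$ is renormalized (by $\xi=0$) and $\eta=rp\neq 0$ also renormalizes it, then the strict inequality $\int X_p\circ d_{rp}\,d\nu>\int X_p\,d\nu=0$ contradicts $\eta$ being a renormalizing point. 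The general case is reduced to this one via the M\"obius composition structure of $\mathrm{Conf}(\mathbb{B}^{n+1})$: writing $d_\eta\circ d_{-\xi}=\rho\circ d_\alpha$ with $\rho\in O(n+1)$ and $\alpha\in\mathbb{B}^{n+1}$, the rotation $\rho$ preserves the renormalization property, so $d_\alpha$ renormalizes the already-renormalized measure $(d_\xi)_*d\nu$, forcing $\alpha=0$. Evaluating $d_\eta\circ d_{-\xi}=\rho$ at $0$ then gives $d_\eta(-\xi)=\rho(0)=0$, whence $\xi=\eta$.

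\emph{Continuity and main difficulty.} Once uniqueness is established, continuity of $\Gamma$ follows by the same compactness argument as in Corollary \ref{cont}: for $d\nu_n\to d\nu$ with $d\nu$ renormalized, any accumulation point $\xi^*\in\overline{\mathbb{B}}^{n+1}$ of $\xi_n=\Gamma(d\nu_n)$ must satisfy $\int X_s\circ d_{\xi^*}\,d\nu=0$ for all $s$, forcing $\xi^*=0$ by uniqueness together with the identity-on-boundary behaviour of $C$. The main obstacle will be verifying the M\"obius composition identity $d_\eta\circ d_{-\xi}=\rho\circ d_\alpha$ in precisely the required form, using the classical parametrization of $\mathrm{Conf}(\mathbb{B}^{n+1})$ as $O(n+1)\ltimes\{d_\alpha : \alpha\in\mathbb{B}^{n+1}\}$, and ensuring that the strict inequality in the monotonicity integral is genuine for the measures that will appear in the applications --- namely absolutely continuous volumes $dg$ with $g\in[g_0]$, for which no degeneracy on antipodal pairs can arise.
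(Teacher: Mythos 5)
Your proposal follows essentially the same route as the paper's proof: existence is Hersch's lemma (which you re-derive by the same boundary-identity degree argument that the paper simply cites from \cite{Hersch}), uniqueness reduces the general case to a renormalized measure via the decomposition $d_\eta\circ d_{-\xi}=\rho\circ d_{\alpha}$ with $\rho$ orthogonal and then invokes the pointwise monotonicity $X_p\circ d_{rp}\ge X_p$ (your explicit formula is correct), and continuity follows by the compactness argument of Corollary \ref{cont}, exactly as in the paper. Your caveat that strictness fails only at $x=\pm p$, so one should restrict to the absolutely continuous measures actually arising in the application, is a fair sharpening of the paper's claimed strict pointwise inequality rather than a deviation.
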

\begin{proof}%[Proof of lemma~\ref{renormsphere}]
  The existence of $\xi$ is precisely Hersch's lemma (see \cite{Hersch}, \cite[p. 144]{SY},
  \cite[p. 274]{LY}).

  Let us prove uniqueness.
  First, let us show that if $d\nu$ is a renormalized measure then
  $\xi=0$. It follows from \eqref{da} by a straightforward computation that if
  $\mathbb{B}^{n+1}\ni \xi\ne 0$ then $X_\xi(x) < X_\xi(d_\xi(x))$
  for any
  $x\in \mathbb{S}^n$. Assume that $d_\xi$ renormalizes $d\nu$ for
  some $\xi\ne 0$. Then
  $$
  0=\int_{\mathbb{S}^n} X_\xi\,d\nu < \int_{\mathbb{S}^n} X_\xi \circ
  d_\xi\,d\nu =0,
  $$
  and we get a contradiction.

  Now, let $d\nu$ be an arbitrary finite measure and suppose that it is
  renormalized by $d_\xi$ and $d_\eta$. Writing
  $d_\eta=d_\eta \circ d_{-\xi} \circ d_\xi$ we get
\begin{equation}
\label{dc} \int_{\mathbb{S}^n} X_s \circ d_\eta \circ d_{-\xi}\,d\tilde
\sigma=0
\end{equation}
where the measure $d\tilde \sigma=(d_\xi)_* d\sigma$ is renormalized.
At the same time, it easy to check that  $d_\eta \circ d_{-\xi} = R \circ
d_{d_{-\xi}(\eta)}$, where $R$ is an orthogonal transformation. Indeed,
since $-d_{-\xi}(\eta)=d_\xi(-\eta)$ we have
$$d_\eta \circ d_{-\xi} \circ d_{d_{\xi}(-\eta)}(0)=d_\eta(-\eta)=0,$$
and it is well known that any M\"obius transformation of the unit
ball preserving the origin is orthogonal \cite[Theorem
3.4.1]{Be}. Since $R$ preserves the center of mass at zero, it
follows from \eqref{dc} that $d_{d_{-\xi}(\eta)}$ renormalizes the
measure $d\tilde \sigma$, which is already renormalized. Therefore, as
we have shown above, $d_{-\xi}(\eta)=0$ and hence $\xi=\eta$.

Similarly to Corollary \ref{cont}, uniqueness of the renormalizing
point implies that its dependence on the measure is continuous.
\end{proof}

\subsection{Spherical caps,  folding and rearrangement}
The set $\mathcal{C}$ of all spherical caps is parametrized as
follows: given $p\in\mathbb{S}^n$ let
$$a_{0,p}=\left\{x\in\mathbb{S}^n\ : (x,p)>0\right\}$$
be the half-sphere centered at $p$. Given $-1<r<1$, let
$$a_{r,p}=d_{rp}(a_{0,p}).$$
To every spherical cap $a \in \mathcal{C}$ we associate a {\it folded}
measure:
$$d\mu_a=\begin{cases}
    dg+\tau_a^*dg & \mbox{ on } a,\\
    0 & \mbox{ on } a^*,
  \end{cases}$$
where $a^*=\mathbb{S}^n\setminus\overline{a} \in
\mathcal{C}$ is the cap adjacent to $a$, and $\tau_a$ is
the conformal reflection with respect to the boundary circle of
$a$. That is, for $a=a_{r,p}$
$$\tau_a=d_{rp}\circ R_p\circ d_{-rp}.$$
Let $\xi(a)\in\mathbb{B}^{n+1}$ be the unique point such that
$d_{\xi(a)}$ renormalizes $d\mu_a$. We obtain a
\emph{rearranged folded measure}
\begin{gather}\label{rearangedsphericalmeasure}
  d\nu_a = (d_{\xi(a)})_* d\mu_a.
\end{gather}

\subsection{Maximizing directions}
Given a finite measure  $d\nu$ on $\mathbb{S}^n$, define
$$V(s)=\int_{\mathbb{S}^n} X_s^2 d\nu.$$
Let $\mathbb{R}P^n$ be the projective space and let $[s] \in
\mathbb{R}P^n$ be the point corresponding to $\pm s \in
\mathbb{S}^n$.
We say that $[s] \in \mathbb{R}P^n$ is a {\it
maximizing direction} for  $d\nu$ if $V(s)
\ge V(t)$ for all $t \in \mathbb{S}^n$. We say that the
spherical cap is {\it simple} if the maximizing direction is unique.
Otherwise, similarly to Lemma \ref{multcaplemma}, there exists a
two-dimensional subspace $W\subset\mathbb{R}^{n+1}$ such that any
$s\in W\cap\mathbb{S}^n$ is a maximizing direction for $d\nu$. In
particular for each $s,t\in W$, $V(s)=V(t).$
In this case the measure $d\nu$ is called {\it multiple}.
\begin{proposition}\label{sphericalmultexist}
Let $g\in[g_0]$ be a conformally round metric on a sphere
$\mathbb{S}^n$ of odd dimension. If the measure $dg$ is simple then
there exists a spherical cap such that the rearranged folded measure
$d\nu_a$ is multiple.
\end{proposition}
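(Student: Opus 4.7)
I would adapt the topological contradiction of Proposition~\ref{multexist}, with $\mathbb{R}P^1$ replaced by $\mathbb{R}P^n$. Assume for contradiction that the rearranged folded measure $d\nu_a$ is simple for every spherical cap $a\subset\mathbb{S}^n$; then each $d\nu_a$ has a unique maximizing direction $[s(a)]\in\mathbb{R}P^n$. Using the parametrization of caps by $(r,p)\in(-1,1)\times\mathbb{S}^n$, this yields a map
$$h:(-1,1)\times\mathbb{S}^n\to\mathbb{R}P^n,\qquad h(r,p)=[s(a_{r,p})].$$
The map $h$ is continuous: the rearranged measure $d\nu_a$ depends continuously on $a$ (by continuity of the folding construction together with continuity of $\Gamma$ from Proposition~\ref{renormsphere}), and the maximizing direction of a simple measure is a continuous function of the measure.

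Next I would determine the behavior of $h$ on the boundary of the closed cylinder $[-1,1]\times\mathbb{S}^n$. As $r\to -1$ the cap $a_{r,p}$ exhausts $\mathbb{S}^n$, hence $d\nu_a\to dg$ and \eqref{sphericalconventionII} gives $h(-1,p)=[e_1]$. As $r\to 1$ the cap degenerates to $p\in\mathbb{S}^n$, and the spherical analogue of Lemma~\ref{flipfloplemma}, proved by a direct adaptation of the argument in Section~\ref{section:flip}, yields $d\nu_a\to R_p^{*}dg$. Using
$$\int_{\mathbb{S}^n}X_s^{2}\,R_p^{*}dg=\int_{\mathbb{S}^n}X_{R_ps}^{2}\,dg,$$
the simplicity of $dg$ forces $R_ps=\pm e_1$, so $h(1,p)=[R_pe_1]$. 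Thus $h$ extends continuously to $[-1,1]\times\mathbb{S}^n$ and realizes a homotopy in $\mathbb{R}P^n$ between the constant map $p\mapsto[e_1]$ and
$$\phi:\mathbb{S}^n\to\mathbb{R}P^n,\qquad \phi(p)=[R_pe_1]=[e_1-2p_1p].$$

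The crux, and the step where the odd-dimensional hypothesis is essential, is to show that $\phi$ is \emph{not} null-homotopic. For $n\ge 2$ the sphere $\mathbb{S}^n$ is simply connected, so $\phi$ admits a continuous lift
$$\tilde\phi:\mathbb{S}^n\to\mathbb{S}^n,\qquad \tilde\phi(p)=e_1-2p_1p,$$
and by covering-space theory $\phi\simeq\mathrm{const}$ if and only if $\deg\tilde\phi=0$. A direct computation shows that for any $v\in\mathbb{S}^n\setminus\{\pm e_1\}$ the equation $e_1-2p_1p=v$ has exactly the two antipodal solutions $\pm(e_1-v)/|e_1-v|$, so every regular value has a pair of antipodal preimages $\{p,-p\}$. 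Moreover $\tilde\phi\circ(-\mathrm{id})=\tilde\phi$, and the antipodal map on $\mathbb{S}^n$ has degree $(-1)^{n+1}$; multiplicativity of the local degree therefore gives
$$\deg_{p}\tilde\phi=(-1)^{n+1}\cdot\deg_{-p}\tilde\phi.$$
For $n$ odd this factor equals $+1$, the two local degrees coincide, and $\deg\tilde\phi=\pm 2\neq 0$, contradicting the null-homotopy provided by $h$. (When $n$ is even the two local degrees instead cancel, which is precisely why the argument breaks down in even dimensions and accounts for the parity restriction in the statement.)
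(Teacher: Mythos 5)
Your proposal is correct, and its skeleton coincides with the paper's: assume every $d\nu_a$ is simple, use continuity of the rearranged measure and of the maximizing direction to get $h:[-1,1]\times\mathbb{S}^n\to\mathbb{R}P^n$, identify the boundary values $[e_1]$ and $[R_pe_1]$ via \eqref{sphericalconventionII} and the spherical flip-flop lemma, and derive a contradiction from the resulting null-homotopy of $\phi(p)=[R_pe_1]$. Where you genuinely diverge is the final topological step. The paper works with the lift $\psi(p)=-R_pe_1$, computes that the regular value $e_1$ has exactly the two preimages $\pm e_1$, checks by an explicit differential computation that both local degrees are $+1$ when $n$ is odd (so $\deg\psi=2$), and then multiplies by $\deg\pi=2$ to get $\deg\phi=4\neq 0$; this uses the notion of degree for maps into $\mathbb{R}P^n$, which is available because $\mathbb{R}P^n$ is orientable for $n$ odd. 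You instead lift the whole null-homotopy to $\mathbb{S}^n$ (homotopy lifting plus the observation that a lift of a constant map is constant, the fiber being discrete), reducing the contradiction to $\deg\tilde\phi\neq 0$, and you compute $\deg\tilde\phi=\pm 2$ not by differentiating at $\pm e_1$ but by the symmetry $\tilde\phi\circ(-\mathrm{id})=\tilde\phi$ together with the fact that the antipodal map preserves orientation in odd dimensions, so the two local degrees at the antipodal preimages of a regular value agree. This buys you two things: you never need orientability of $\mathbb{R}P^n$ or a degree theory on it, and you avoid the explicit Jacobian computation (at the cost of invoking Sard to get a regular value $v\neq e_1$, which you leave implicit but is harmless since $\tilde\phi$ is smooth and surjective with exactly two antipodal preimages over every $v\neq e_1$). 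Your parenthetical explanation of the failure in even dimensions matches the paper's Remark \ref{whyodd}.
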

The proof of Proposition~\ref{sphericalmultexist} is similar to the
proof of Proposition~\ref{multexist}.
We assume the measures $dg$ as well as each $d\nu_a$ to be
simple. Given a cap $a\subset\mathbb{S}^n$ let
$[s(a)]\in\mathbb{R}P^1$ be the unique maximizing direction for
$d\nu_a$. The map $a\mapsto [s(a)]$ is continuous.
The following spherical version of the ``flip-flop'' property is
proved exactly as Lemma~\ref{flipfloplemma}.
\begin{lemma}\label{sphericalflipfloplemma}
  If a sequence of spherical caps $a\in\mathcal{C}$  degenerates to a
  point $p\in\mathbb{S}^n$, the limiting rearranged measure is a
  ``flip-flop'' of the original measure $dg$:
  \begin{gather}
    \lim_{a\rightarrow p}d\nu_a=R_p^*dg. %\tag{F}\label{sphericalfliflop}
  \end{gather}
\end{lemma}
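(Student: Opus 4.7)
The plan is to mirror the proof of Lemma \ref{flipfloplemma} in section \ref{section:flip}, with two main adjustments for the spherical setting: on $\mathbb{S}^n$ the composition $d_\eta \circ d_\zeta$ differs from $d_{d_\zeta(\eta)}$ by an orthogonal factor (as recorded in the proof of Proposition \ref{renormsphere}) rather than by a phase, and the rearrangement here is simpler because $d\nu_a = (d_{\xi(a)})_* d\mu_a$ involves no Riemann-mapping step.

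First, I would introduce the approximating measure $d\hat{\mu}_a := (\tau_a)_* dg$ on $\mathbb{S}^n$. Using $\tau_a(a) = a^*$ and the fact that $\tau_a$ is an involution, one sees that $d\mu_a - d\hat{\mu}_a$ equals $dg$ restricted to $a$ minus its push-forward by $\tau_a$, so
\[
\|d\mu_a - d\hat{\mu}_a\| \le 2\, dg(a) \to 0 \quad \text{as } a \to p,
\]
since $dg$ is finite and the cap collapses to a point. Next, I would compute the renormalizing point $\zeta_a := \Gamma(d\hat{\mu}_a)$ explicitly and verify the central identity $d_{\zeta_a} \circ \tau_a = R_p$. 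Since $\tau_a = d_{rp} \circ R_p \circ d_{-rp}$ and conjugation by $R_p$ sends $d_\xi$ to $d_{R_p \xi}$, this reduces to a direct computation in the abelian one-parameter subgroup $\{d_{tp}\}_{t\in(-1,1)}$ along the axis through $p$, and should yield $\zeta_a = -\frac{2r}{1+r^2}p$, in perfect analogy with the planar case. Once established, this gives $(d_{\zeta_a})_* d\hat{\mu}_a = R_p^* dg$, which is itself renormalized by \eqref{sphericalconventionI} and the identity $X_s \circ R_p = X_{R_p s}$.

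Now set $\eta_a := \Gamma((d_{\zeta_a})_* d\mu_a)$. Since pushforward by a diffeomorphism is an isometry of $\mathcal{M}$, $\|(d_{\zeta_a})_* d\mu_a - (d_{\zeta_a})_* d\hat{\mu}_a\| \to 0$; continuity of $\Gamma$ from Proposition \ref{renormsphere} then forces $\eta_a \to 0$. By the composition rule in Proposition \ref{renormsphere}, $d_{\eta_a} \circ d_{\zeta_a} = O_a \circ d_{\xi(a)}$, where $O_a$ is orthogonal and $\xi(a) = d_{\zeta_a}(\eta_a)$; since $O_a$ preserves the center of mass, $d_{\xi(a)}$ renormalizes $d\mu_a$ and hence equals the transformation used in the rearrangement. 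Inverting,
\[
(d_{\xi(a)})_* d\mu_a = O_a^{-1} \circ (d_{\eta_a})_* \circ (d_{\zeta_a})_* d\mu_a.
\]
As $a \to p$, $\eta_a \to 0$ yields $d_{\eta_a} \to \mathrm{id}$, and the explicit dependence of $O_a$ on $(\zeta_a,\eta_a)$ gives $O_a \to \mathrm{id}$ as well; combining with $\|(d_{\zeta_a})_* d\mu_a - R_p^* dg\| \to 0$ yields $d\nu_a \to R_p^* dg$ in norm.

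The main obstacle will be verifying the explicit identity $d_{\zeta_a} \circ \tau_a = R_p$: the planar counterpart was a quick calculation in the complex plane, whereas here one must manipulate the formula \eqref{da} together with the conjugation identity $R_p \circ d_\xi \circ R_p = d_{R_p \xi}$ and compute the abelian composition $d_{\zeta_a} \circ d_{rp}$ along the axis through $p$. A secondary subtlety is justifying $O_a \to \mathrm{id}$ as $\eta_a \to 0$; this follows from continuity of the orthogonal factor in $(\zeta_a,\eta_a)$, since at $\eta_a = 0$ the decomposition forces $O_a = \mathrm{id}$.
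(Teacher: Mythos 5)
Your proposal is correct and is essentially the paper's argument: the paper proves this lemma only by the remark that it ``is proved exactly as Lemma~\ref{flipfloplemma}'', and your adaptation reproduces that planar proof step by step, correctly identifying the two points that change (the unimodular factor $q(a)$ becomes an orthogonal factor $O_a$ via the composition identity from Proposition~\ref{renormsphere}, and the absence of the Riemann-mapping step $\phi_b$, $T_a'$ makes the final comparison $\|(d_{\xi(a)})_*d\mu_a-d\nu_a\|\to 0$ vacuous). The key computations you flag --- $\zeta_a=-\frac{2r}{1+r^2}p$ via the one-parameter subgroup $d_{sp}\circ d_{tp}=d_{\frac{s+t}{1+st}p}$, the identity $d_{\zeta_a}\circ\tau_a=R_p$, and $O_a\to\mathrm{id}$ as $\eta_a\to 0$ --- all go through as you describe.
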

Similarly to Lemma~\ref{degeneratecaps} we study the maximizing
directions for degenerating caps.
\begin{lemma}\label{sphericaldegeneratecaps}
  Suppose the measures $dg$ as well as each $d\nu_a$ are simple.
  Then
  \begin{gather}
    \lim_{a\rightarrow\mathbb{S}^n} [s(a)]=[e_1]\label{sphericaleqndegenerate1}\\
    \lim_{a\rightarrow p} [s(a)]=
    [R_pe_1]\label{sphericaleqndegenerate2}.
  \end{gather}
\end{lemma}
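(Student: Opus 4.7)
The plan is to mirror the proof of Lemma \ref{degeneratecaps}, substituting the spherical renormalization and flip-flop tools (Proposition \ref{renormsphere} and Lemma \ref{sphericalflipfloplemma}) for their planar counterparts. The two limits correspond to the two boundary regimes of the parameter space of spherical caps, and in each regime the rearranged folded measure $d\nu_a$ has an identifiable limit, from which the maximizing direction of its quadratic form $V_a(s)=\int_{\mathbb{S}^n}X_s^2\,d\nu_a$ can be read off.

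For \eqref{sphericaleqndegenerate1}, I would first argue that as the cap $a$ exhausts $\mathbb{S}^n$ the complement $a^*$ collapses to a point, so the folded measure $d\mu_a=dg|_a+\tau_a^*(dg|_{a^*})$ converges to $dg$ in the norm $\|\cdot\|$. By \eqref{sphericalconventionI} the measure $dg$ is already renormalized, hence $\Gamma(dg)=0$, and continuity of $\Gamma$ from Proposition \ref{renormsphere} forces $\xi(a)\to 0$ and $d_{\xi(a)}\to \mathrm{id}$. Consequently $d\nu_a=(d_{\xi(a)})_*d\mu_a\to dg$, and since $X_s^2$ is continuous and uniformly bounded on $\mathbb{S}^n$, $V_a(s)$ converges uniformly in $s$ to $V(s)=\int_{\mathbb{S}^n}X_s^2\,dg$. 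By \eqref{sphericalconventionII} together with the simplicity hypothesis on $dg$, the unique maximizing direction of $V$ is $[e_1]$, so $[s(a)]\to[e_1]$.

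For \eqref{sphericaleqndegenerate2}, I would invoke the spherical flip-flop Lemma \ref{sphericalflipfloplemma} to obtain $d\nu_a\to R_p^*dg$ as $a\to p$. Since $R_p$ is a symmetric orthogonal involution, $X_s\circ R_p(x)=(R_px,s)=(x,R_ps)=X_{R_ps}(x)$, which yields
\begin{equation*}
\lim_{a\to p}V_a(s)=\int_{\mathbb{S}^n}(X_s\circ R_p)^2\,dg=\int_{\mathbb{S}^n}X_{R_ps}^2\,dg=V(R_ps).
\end{equation*}
Because $[e_1]$ is the unique maximizer of $V$, the limit is maximized exactly when $R_ps=\pm e_1$, equivalently $s=\pm R_pe_1$ (using $R_p^2=\mathrm{id}$), giving $[s(a)]\to[R_pe_1]$.

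The one subtle point is the passage from convergence of measures to convergence of maximizing directions: I need that if $V_a\to V_\infty$ uniformly on the compact space $\mathbb{R}P^n$ and $V_\infty$ has a unique maximizer, then the unique maximizer of $V_a$ converges to that of $V_\infty$. This is a standard compactness-plus-uniqueness argument, and the required uniform convergence of $V_a$ follows from convergence of $d\nu_a$ in $\|\cdot\|$ together with the uniform bound on $X_s^2$. This is arguably the main technical step, but it is routine compared to the geometric content, which is entirely absorbed by the flip-flop Lemma \ref{sphericalflipfloplemma} and Proposition \ref{renormsphere}.
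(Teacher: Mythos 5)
Your proof is correct and follows essentially the same route as the paper, which handles this lemma exactly as the planar Lemma~\ref{degeneratecaps}: the first limit from convergence of the rearranged measure to $dg$ together with \eqref{sphericalconventionII}, and the second from the spherical flip-flop Lemma~\ref{sphericalflipfloplemma} via $X_s\circ R_p=X_{R_ps}$, so that the maximizer satisfies $R_ps=\pm e_1$. Your extra remarks (norm convergence of $d\mu_a$ to $dg$, continuity of the renormalizing point, and the compactness-plus-uniqueness passage from convergence of $V_a$ to convergence of maximizing directions) only make explicit what the paper leaves implicit.
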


\smallskip

\begin{proof}[Proof of Proposition~\ref{sphericalmultexist}]
By convention~(\ref{sphericalconventionII}), $[e_1]$ is the unique
maximizing direction for $dg$.
Recall that the space of caps has been identified with
$(-1,1)\times\mathbb{S}^n.$
The continuous map
$$h:[-1,1]\times\mathbb{S}^n\rightarrow\mathbb{R}P^n$$
is defined by
\begin{gather*}
  h(r,p)=
  \begin{cases}
    [e_1] & \mbox{ for } r=-1,\\
    [s(a_{r,p})] & \mbox{ for } -1<r<1,\\
    [R_pe_1] & \mbox{ for } r=1.
  \end{cases}
\end{gather*}
That is, $h$ is an homotopy between a constant map and the map
$$\phi:\mathbb{S}^n\rightarrow\mathbb{R}P^n$$
 defined by
$\phi(p)=[R_pe_1].$
We will show that this is impossible when $n$ is odd by computing its
degree.
The map $\phi$ lifts to the map $\psi:\mathbb{S}^n\rightarrow\mathbb{S}^n$
defined by
\begin{gather}\label{formulapsi}
  \psi(p)=-R_pe_1=2(e_1,p)p-e_1.
\end{gather}
The two solutions of $\psi(p)=e_1$ are $e_1$ and $-e_1$. It is
easy to check that since the dimension $n$ is odd, both
differentials
% $$Df_p(v)=2(e_1,v)p+2(e_1,p)v.$$
\begin{gather*}
  D_{e_1}\psi:T_{e_1}\mathbb{S}^n\rightarrow T_{e_1}\mathbb{S}^n\\
  D_{-e_1}\psi:T_{-e_1}\mathbb{S}^n\rightarrow T_{e_1}\mathbb{S}^n
\end{gather*}
preserve the orientation. This implies $\mbox{deg}(\psi)= 2.$
Moreover, the quotient map
$\pi:\mathbb{S}^n\rightarrow\mathbb{S}^n$ has degree 2 for $n$
odd. It follows that
\begin{align*}
  \mbox{deg}(\phi)&=\mbox{deg}(\pi\circ\psi)\\
  &=\mbox{deg}(\pi) \mbox{deg}(\psi)=4.
\end{align*}
Since the degree of a map is invariant under homotopy, this is a
contradiction.
\end{proof}
\begin{remark}
\label{whyodd} In even dimensions one of the differentials $D_{\pm
e_1}$ preserves the orientation and the other reverses it.
Therefore, $\mbox{deg}(\phi)=0$ and the proof of Proposition
\ref{sphericalmultexist} does not work in this case. In dimension
two the existence of a multiple cap was proved in \cite{Nad} using a
more sophisticated topological argument.
\end{remark}

\subsection{Test functions and the modified Rayleigh quotient}
\label{modR} Let $g_0$ be the standard round metric on the sphere
$\mathbb{S}^n$, so that
\begin{equation}
\label{sphere:volume} \omega_n:=\int_{\mathbb{S}^n}dg_0=
\frac{2\pi^{\frac{n+1}{2}}}{\Gamma(\frac{n+1}{2})}.
\end{equation}

Let $g\in[g_0]$ be a conformally round Riemannian metric of volume
one, that is $\int_{\mathbb{S}^n}\,dg=1.$ The Rayleigh quotient of a
non-zero function $u\in H^1(\mathbb{S}^n)$ is
$$R(u)=
\frac{\int_{\mathbb{S}^n}|\nabla_g u|_g^2\,dg}
{\int_{\mathbb{S}^n}u^2\,dg}.$$
We use the following variational characterization of $\lambda_2(g)$:
\begin{gather}\label{varicharact1}
  \lambda_2(g)=\inf_{E}
  \sup_{0\neq u\in E}R(u)
\end{gather}
where $E$ varies among all two-dimensional subspaces of the Sobolev
space $H^1({\mathbb{S}^n})$ that are orthogonal to constants, in the
sense that for each $f\in E$, $\int_{\mathbb{S}^n}f\,dg=0.$
Following~\cite{FN}, we use a \emph{modified Rayleigh quotient}:
$$R'(u)=\frac{\left(\int_{\mathbb{S}^n}|\nabla_g u|_g^n\,dg\right)^{2/n}}
{\int_{\mathbb{S}^n}u^2\,dg}.$$ It follows from Holder inequality
that $R(u)\leq R'(u)$ for each $0\neq u\in H^1(\mathbb{S}^n)$. It is
easy to check that $\int_{\mathbb{S}^n}|\nabla_g u|_g^n\,dg$ is
conformally invariant for each dimension $n$ so that we can rewrite
the modified Rayleigh quotient as follows:
$$R'(u)=\frac{\left(\int_{\mathbb{S}^n}|\nabla u|^n\,dg_0\right)^{2/n}}
{\int_{\mathbb{S}^n}u^2\,dg}$$
where the gradient and it's norm are with respect to the round metric
$g_0$.

\smallskip

Assume that $dg$ is simple % and that conditions~(\ref{sphericalconventionI})
%and~(\ref{sphericalconventionII}) are satisfied. Let
and let $a\subset\mathbb{S}^n$ be a spherical cap such that $d\nu_a$
is multiple.  Let $W\subset\mathbb{R}^{n+1}$ be the corresponding
two dimensional subspace of maximizing directions. Given a function
$u:a\rightarrow\mathbb{R}$, the \emph{lift} of $u$,
$\tilde{u}:\mathbb{S}^n\rightarrow\mathbb{R}$ is defined exactly as
in Definition~\ref{defLift}.
\begin{proposition}
\label{modified}
  Given $s\in W\subset \mathbb{R}^{n+1}$, the function
  $u_a^s=X_s\circ d_{\xi(a)}:a\rightarrow\mathbb{R}$
  is such that
  $$R'(\tilde{u}_a^s)<(n+1)\left(4
    \frac{\pi^{\frac{n+1}{2}}\Gamma(n)}{\Gamma(\frac{n}{2})\Gamma(n+\frac{1}{2})}
  \right)^{2/n}.$$
\end{proposition}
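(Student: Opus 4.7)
The plan is to bound the modified Rayleigh quotient $R'(\tilde{u}_a^s)$ by estimating its numerator and denominator separately, exploiting the conformal invariance of $\int|\nabla u|^n\,dg_0$ for the numerator and the multiplicity hypothesis on $d\nu_a$ for the denominator.

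For the numerator, I would first use the reflection symmetry of the lift: since $\tau_a$ is a conformal involution of $\mathbb{S}^n$ interchanging $a$ and $a^*$, the conformal invariance of the $L^n$ norm of the gradient gives
$$\int_{\mathbb{S}^n}|\nabla \tilde{u}_a^s|^n\,dg_0 = 2\int_a |\nabla u_a^s|^n\,dg_0.$$
Since $d_{\xi(a)}:a\to d_{\xi(a)}(a)$ is also conformal, another application of conformal invariance yields
$$\int_a|\nabla(X_s\circ d_{\xi(a)})|^n\,dg_0 = \int_{d_{\xi(a)}(a)}|\nabla X_s|^n\,dg_0 < \int_{\mathbb{S}^n}|\nabla X_s|^n\,dg_0,$$
with strict inequality because $d_{\xi(a)}(a)$ is a proper spherical cap and $|\nabla X_s|_{g_0}$ is strictly positive on an open subset of its complement. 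A direct calculation in geodesic polar coordinates centered at $s$ gives $|\nabla X_s|_{g_0}=\sin\theta$, so the integral reduces to $\omega_{n-1}\int_0^\pi\sin^{2n-1}\theta\,d\theta$; evaluating the Beta integral yields $\int_{\mathbb{S}^n}|\nabla X_s|^n\,dg_0=\frac{2\pi^{(n+1)/2}\Gamma(n)}{\Gamma(n/2)\Gamma(n+1/2)}$.

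For the denominator, the definitions of the lift, the folded measure $d\mu_a$, and the rearranged measure $d\nu_a=(d_{\xi(a)})_*d\mu_a$ telescope into
$$\int_{\mathbb{S}^n}(\tilde{u}_a^s)^2\,dg = \int_a (u_a^s)^2\,d\mu_a = \int_{\mathbb{S}^n}X_s^2\,d\nu_a = V(s).$$
Since $d\nu_a$ is multiple, $V$ attains its maximum $M$ on the entire two-dimensional subspace $W\cap\mathbb{S}^n$. The key observation — this is the subtlest step, and the analogue of the simple averaging trick in the planar proof — is that for any orthonormal basis $\{f_1,\dots,f_{n+1}\}$ of $\mathbb{R}^{n+1}$, the identity $\sum_i X_{f_i}^2\equiv 1$ on $\mathbb{S}^n$ together with the mass identity $\int d\nu_a=\int d\mu_a=\int dg=1$ gives
$$\sum_{i=1}^{n+1}V(f_i)=\int_{\mathbb{S}^n}d\nu_a=1.$$
Diagonalizing the positive semidefinite quadratic form $V$, its eigenvalues $\lambda_1\geq\cdots\geq\lambda_{n+1}\geq 0$ satisfy $\lambda_1=\lambda_2=M$ by multiplicity and $\sum\lambda_i=1$ by the trace identity. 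The constraint $\lambda_i\leq M$ for all $i$ then forces $M\geq\frac{1}{n+1}$, so for any unit $s\in W\cap\mathbb{S}^n$ one has $\int_{\mathbb{S}^n}(\tilde{u}_a^s)^2\,dg\geq\frac{1}{n+1}$.

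Combining these two bounds gives the claimed estimate, the strict inequality originating entirely from the numerator step. The principal obstacle is the denominator bound: the multiplicity of $d\nu_a$ gives equality of the \emph{top two} eigenvalues of $V$, not all $n+1$ of them, and it is only after using the trace constraint $\sum V(f_i)=1$ — which is special to the linear coordinate functions $X_s$ on the sphere — that this translates into the dimensional factor $1/(n+1)$. By scale invariance of $R'$, the hypothesis $s\in W$ (rather than $s\in W\cap\mathbb{S}^n$) in the statement is immediate.
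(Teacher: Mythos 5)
Your argument is correct and follows essentially the same route as the paper: the numerator is bounded by $2K_n$ via the reflection symmetry of the lift, conformal invariance of $\int|\nabla u|^n$, the strict inclusion $d_{\xi(a)}(a)\subsetneq\mathbb{S}^n$, and the polar-coordinate evaluation of $K_n$, while the denominator bound $\int_{\mathbb{S}^n}(\tilde{u}_a^s)^2\,dg\geq\frac{1}{(n+1)}$ comes from the identity $\sum_j X_{e_j}^2\equiv 1$, the unit total mass of $d\nu_a$, and the maximizing property of $s\in W$. Your eigenvalue/trace phrasing of the last step is just a repackaging of the paper's averaging argument, so there is nothing essentially new or missing.
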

\begin{proof}
  The conformal invariance of the numerator in $R'(u)$ implies
  \begin{multline}
  \label{gradu}
    \left(\int_{\mathbb{S}^n}|\nabla_g \tilde{u}_a^s|_g^n\,dg\right)^{2/n}=
\left(\int_{a}|\nabla_g u_a^s|_g^n\,dg\right)^{2/n}+\\
\left(\int_{a^*}|\nabla_g (u_a^s \circ
\tau_a)|_g^n\,dg\right)^{2/n}=
    \left(2\int_{a}|\nabla_g u_a^s|_g^n\,dg\right)^{2/n} \\
    =\left(2\int_{d_{\xi(a)}(a)}|\nabla_g X_s|_g^n\,dg\right)^{2/n}
    <
\left(2\int_{\mathbb{S}^n}|\nabla_{g_0}
      X_s|_{g_0}^n\,dg_0\right)^{2/n}
  \end{multline}
  Here the second equality follows from conformal invariance. To obtain the inequality at the end
  we again use the conformal invariance as well as the fact that
  $d_{\xi(a)}(a) \subsetneq
  \mathbb{S}^n$.
%  So that
 % $$\left(\int_{\mathbb{S}^n}|\nabla_g
  %  \tilde{u}_a^s|_g^n\,dg\right)^{2/n}=
  %(2K_n)^{2/n}.$$
  To estimate the denominator in the modified Rayleigh quotient we
  first note that for any $x=(x_1,\dots x_{n+1}) \in \mathbb{S}^n$,
  $$\sum_{j=1}^{n+1} \tilde{u}_a^{e_j}(x)^2=\sum_{j=1}^{n+1} x_j^2=1.$$
  Therefore, given that $\int_{\mathbb{S}^n} dg=1$ we obtain:
  \begin{align*}
    \sum_{j=1}^{n+1}\int_{\mathbb{S}^n}
    (\tilde{u}_a^{e_j})^2 \,dg=1
  \end{align*}
  Now, since $W$ is a subspace of maximizing directions for the measure $d\nu_a$ defined by
  \eqref{rearangedsphericalmeasure}, for each $s\in W$ we have
\begin{equation}
\label{denomin}
   \int_{\mathbb{S}^n}(\tilde{u}_a^s)^2 \,dg\geq\frac{1}{n+1}.
  \end{equation}
Set
  $$K_n:=
  \int_{\mathbb{S}^n}|\nabla_{g_0}X_s|_{g_0}^n\,dg_0.$$
Combining \eqref{gradu} and \eqref{denomin} we get
\begin{equation}
\label{rprime}
  R'(\tilde{u}_a^s)\le (n+1)\left(2K_n\right)^{2/n}.
\end{equation}
Proposition \ref{modified} then follows from the lemma below.
\begin{lemma}
\label{kn} The constant $K_n$ is given by
$$K_n=\frac{2\pi^{\frac{n+1}{2}}\Gamma(n)}{\Gamma(\frac{n}{2})\Gamma(n+\frac{1}{2})}.$$
\end{lemma}
\noindent {\it Proof.} Recall that $g_0$ is the standard round
metric on the unit sphere $\mathbb{S}^n$. If we consider
$X_s(x)=(x,s)$ as a function on $\mathbb{R}^{n+1}$ then its gradient
is just the constant vector $s$:
$$\mbox{grad}_{\mathbb{R}^{n+1}}X_s=s.$$
This means that for any point $p \in \mathbb{S}^n$ the gradient of
the function $X_s:\mathbb{S}^n\rightarrow\mathbb{R}$ at $p$  is the
projection of $s$ on the tangent space $T_p\mathbb{S}^n$:
$$\nabla X_s(p)=s-(s,p)p.$$
Therefore, taking into account that $|s|=|p|=1$, we get
$$
  |\nabla X_s(p)|^n
  =(|s-(s,p)p|^2)^{n/2}
  =(1-(s,p)^2)^{n/2},
$$
and hence
\begin{align*}
  K_n=\int_{\mathbb{S}^n}(1-(s,p)^2)^{n/2}dg_0.
\end{align*}
Let $\theta$ be the angle between the vectors $p$ and $s$. %and the
%equatorial hyper-sphere $\mathbb{S}^{n-1}\subset\mathbb{S}^{n}$ that
%is orthogonal to $s\in\mathbb{S}$, so that
%$$(s,p)^2=\cos(\theta).$$
%Let
%$$w_{n-1}=2\frac{\pi^{n/2}}{\Gamma(n/2)}$$
% be the volume of
%$\mathbb{S}^{n-1}$, which means the volume of the $n-1$-sphere of
%radius $r$ is $w_{n-1}r^{n-1}$. We have
Making a change of variables we obtain
$$
  K_n=\omega_{n-1} \int_{0}^{\pi}(1-\cos^2 \theta)^{n/2} (\sin \theta)^{n-1}\,d\theta
  =\omega_{n-1}\int_{0}^{\pi}\sin^{2n-1} \theta \,d\theta,
  $$
where $\omega_{n-1}$ is the volume of the standard round sphere
$\mathbb{S}^{n-1}$ given by \eqref{sphere:volume}.

The calculation of a table integral \cite[3.621(4)]{GR}
$$\int_{0}^{\pi}\sin^{2n-1} \theta \,d\theta=\frac{\sqrt{\pi}\,\,\Gamma(n)}{\Gamma(n+\frac{1}{2})}$$
%gets
%\begin{align*}
%  K_n&=2\frac{\pi^{n/2}}{\Gamma(n/2)}\left(\pi^{1/2}\frac{\Gamma(n)}{\Gamma(n+\frac{12})}\right)\\
%  &=\frac{2\pi^{\frac{n+1}{2}}\Gamma(n)}{\Gamma(\frac{n}{2})\Gamma(n+\frac{1}{2})}.
%\end{align*}
completes the proofs of Lemma \ref{kn} and Proposition
\ref{modified}.
\end{proof}
\begin{remark}
\label{whynotsharp} It follows from H\"older inequality that
$R(u)=R'(u)$ if and only if $u$ is a constant function. Since
$\nabla_{g_0}X_s \neq \mbox{const}$ we get a {\it strict} inequality
$R'(\tilde{u}_a^s) > R(\tilde{u}_a^s)$. This is why the estimate
\eqref{sphere:bound} is not sharp. In the context of the first
eigenvalue, a similar difficulty was encountered in \cite[Lemma
4.15]{Berger}) and overcame in \cite{EI}. To apply the approach of
\cite{EI} we need to have a spherical cap of multiplicity $n+1$;
existence of a cap of multiplicity two proved in Proposition
\ref{sphericalmultexist} is not enough for this purpose.
\end{remark}

%\smallskip

\begin{proof}[Proof of Theorem~\ref{sphere}] If the measure $dg$ is simple, then \eqref{sphere:bound} follows
from Proposition \ref{modified} and the variational principle
\eqref{varicharact1}. If $dg$ is multiple, then, as in the proof of
Theorem \ref{maintheorem} at the end of section
\ref{subsection:testfunctions}, one can work directly with this
measure without any folding and rearrangement. Inspecting the proof
of Proposition \ref{modified} we notice that the factor $2^{2/n}$
disappears in \eqref{gradu} and hence also in \eqref{rprime}.
Therefore, in this case we get an even better bound than
\eqref{sphere:bound}. This completes the proof of
Theorem~\ref{sphere}.
\end{proof}

\end{document}